\newtheorem{theorem}{Theorem}
\numberwithin{equation}{section}
\numberwithin{lemma}{section}
\numberwithin{theorem}{section}
\numberwithin{corollary}{section}
\begin{document}
\setcounter{page}{1}

\markboth{V. Sahai and  A. Verma}{Recursion formulas}

\title{On the recursion formulas for the matrix special functions of one and two variables}

\author{Vivek Sahai
\\ 
Department of Mathematics and Astronomy\\ Lucknow University \\Lucknow 226007, India 
\\
sahai\_vivek@hotmail.com
\\[10pt]
Ashish Verma\footnote{Corresponding author}
\\ 
Department of Mathematics\\ Prof. Rajendra Singh (Rajju Bhaiya)\\ Institute of Physical Sciences for Study and Research \\  V. B. S. Purvanchal University, Jaunpur  (U.P)- 222003, India\\
vashish.lu@gmail.com}

\maketitle
\begin{abstract}
Special matrix functions have recently been investigated for regions of convergence, integral representations and the systems of matrix differential equation that these functions satisfy. In this paper, we find the recursion formulas for the Gauss hypergeometric matrix function. We also give the recursion formulas for the two variable Appell matrix functions. \\[12pt]
Keywords: Matrix functional calculus, Gauss  hypergeometric matrix function, Appell matrix functions\\[12pt]
AMS Subject Classification:  15A15; 33C65; 33C70
\end{abstract}

\section{Introduction}
The theory of matrix special functions has attracted considerable attention in the last two decades. Special matrix functions appear in the literature related to statistics \cite{A}, Lie theory \cite{AT} and in connection with the matrix  version of Laguerre, Hermite and Legendre differential equations and the corresponding polynomial families \cite{LR,LRE,LR1}. Recently, Abd-Elmageed \textit{et. al.} \cite{AAAS} have obtained numerous contiguous and recursion formulas satisfied by the first Appell matrix function, namely $F_1$. Motivated by this study, in the present paper, we study recursion formulas for the Gauss hypergeometric matrix function and all four Appell matrix functions. 

Recursion formulas for the Appell functions have been studied in the literature, see Opps, Saad and Srivastava \cite{OP} and Wang \cite{XW}.  Recursion formulas for several multivariable hypergeometric functions were presented in \cite{VS1, VS2, VS3, VS4} . 

The paper is organized as follows. In Section~2, we give a review of basic definitions that are needed in the sequel. In Section~3, we obtain the recursion formulas for the Gauss hypergeometric function. In Section~4, recursion formulas for the four Appell matrix functions are obtained. We remark that our results generalize the corresponding results in \cite{AAAS}, where all the matrices involved are assumed to be commuting.

\section{Preliminaries}
Let $\mathbb{C}^{r\times r}$ be the vector space of $r$-square matrices with complex entries. For any matrix $A\in \mathbb{C}^{r\times r}$, its spectrum $\sigma(A)$ is the set of eigenvalues of $A$. If $f(z)$ and $g(z)$ are holomorphic functions of the complex variable $z$, which are defined in an open set $\Omega$ of the complex plane and $A\in \mathbb{C}^{r\times r}$ with $\sigma(A)\subset\Omega$, then from the properties of the matrix functional calculus \cite{ND}, we have  $f(A) g(A)= g(A) f(A)$. If $B\in\mathbb{C}^{r\times r}$ is a matrix for which $\sigma(B)\subset\Omega$, and if $AB=BA$, then $f(A) g(B)= g(B) f(A)$. A square matrix $A$ in $\mathbb{C}^{r\times r}$  is said to be positive stable if $\Re(\lambda)>0$ for all $\lambda\in\sigma(A)$. 

The reciprocal gamma function $\Gamma^{-1}(z)=1/\Gamma(z)$ is an entire function of the complex variable $z$. The image of $\Gamma^{-1}(z)$ acting on $A$, denoted by $\Gamma^{-1}(A)$, is a well defined matrix. If $A+nI$ is invertible for all integers $n\geq 0$, then the reciprocal gamma function \cite{LC1} is defined as 
$\Gamma^{-1}(A)= (A)_n \ \Gamma^{-1}(A+nI)$, where $(A)_n$ is the shifted factorial matrix function for $A\in\mathbb{C}^{r\times r}$ given by \cite{LC}
\begin{align*} 
	(A)_n=
\begin{cases}
I ,
& n=0,\\
 A(A+I) \cdots (A+(n-1)I) ,
&n\geq 1 .
\end{cases}
\end{align*}
$I$ being the $r$-square identity matrix.
If $A\in\mathbb{C}^{r\times r}$ is a positive stable matrix and $n\geq1$, then by \cite{LC1} we have $\Gamma(A) = \lim_{n \to\infty}(n-1)! (A)^{-1}_{n} n^A$.  For a good survey of theory of special matrix functions, see \cite{MA}.

The Gauss hypergeometric  matrix function \cite{LC} is defined by 
\begin{align}
_2F_1(A, B; C; x)=\sum_{n=0}^{\infty}\frac{(A)_{n}(B)_{n}(C)_{n}^{-1}}{n!}\, x^{n},\label{1eq1}
\end{align}
for matrices $A$, $B$ and $C$ in $\mathbb{C}^{r\times r}$ such that $C+nI$ is invertible for all integers  $n\geq 0$ and $|x|\leq 1$.

The Appell  matrix functions are defined as 
\begin{align}
F_{1}(A, B, B'; C;x, y)&=\sum_{m, n=0}^{\infty}(A)_{m+n}(B)_{m}(B')_{n}(C)_{m+n}^{-1} \, \frac{x^m\, y^n}{m! \, n!},\label{1eq2}
\\
F_{2}(A, B, B'; C, C';x, y)&=\sum_{m, n=0}^{\infty}(A)_{m+n}(B)_{m}(B')_{n}(C)_{m}^{-1}(C')_{n}^{-1} \, \frac{x^m\, y^n}{m! \, n!},\label{1eq3}
\\
F_{3}(A, A', B, B'; C;x, y)&=\sum_{m, n=0}^{\infty}(A)_{m}(A')_{n}(B)_{m}(B')_{n}(C)_{m+n}^{-1}\, \frac{x^m\, y^n}{m! \, n!},\label{1eq4}
\\
F_{4}(A, B; C, C';x, y)&=\sum_{m, n=0}^{\infty}(A)_{m+n}(B)_{m+n}(C)_{m}^{-1}(C')_{n}^{-1}\, \frac{x^m\, y^n}{m! \, n!},\label{1eq5}
\end{align}
where $A$, $A'$, $B$, $B'$, $C$, $C'\in\mathbb{C}^{r\times r}$  such that $C+nI$  and $C'+nI$ are invertible for all  integers $n\geq 0$. For regions of convergence of (\ref{1eq2})-(\ref{1eq5}), see \cite{QM,RD,RD3}.

\section{Recursion formulas for the Gauss hypergeometric matrix function}
In this section, we obtain the recursion formulas for the Gauss hypergeometric matrix function. Throughout the paper, $I$ denotes the identity matrix and $s$ denotes a non-negative integer. 
\begin{theorem}\label{rth12} Let $A+sI$ be  invertible  for all  integers $s\geq0$ and let  $B$, $C$ be commuting matrices. Then the following recursion formula holds  true for the Gauss hypergeometric matrix function $_2F_{1}(A, B; C; x)$:
\begin{align}
&_2F_{1}(A+sI, B; C; x)={_2F_{1}}(A, B; C; x)+ x\left[\sum_{k=1}^{s}{_2F_{1}}(A+kI, B+I; C+I; x)\right]{B}{C}^{-1}.
\label{2eq1}
\end{align}
Furthermore, if $A-kI$ is invertible for  integers $k\leq s$, then
\begin{align}
&_2F_{1}(A-sI, B; C; x)={_2F_{1}}(A, B; C; x)- x\left[\sum_{k=0}^{s-1}{_2F_{1}}(A-kI, B+I; C+I; x)\right]{B}{C}^{-1}.
\label{2eq2}
\end{align}
\end{theorem}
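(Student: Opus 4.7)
The plan is to prove both recursion formulas by induction on $s$, with the case $s=1$ being the decisive step from which the general case follows telescopically.

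For the base case of \eqref{2eq1}, I would start from the elementary Pochhammer identity
\[
(A+I)_n - (A)_n = n\,(A+I)_{n-1},\qquad n\geq 1,
\]
which holds for any matrix $A$ (even without invertibility assumptions) and can be verified either directly from the definition or by noting that $(A)_n=A(A+I)_{n-1}$ while $(A+I)_n=(A+nI)(A+I)_{n-1}$, so the difference equals $nI\cdot(A+I)_{n-1}$. Subtracting the series for $_2F_1(A,B;C;x)$ from that of $_2F_1(A+I,B;C;x)$ term by term and invoking this identity would reduce the series to
\[
\sum_{n=1}^{\infty}\frac{(A+I)_{n-1}(B)_{n}(C)_{n}^{-1}}{(n-1)!}\,x^{n}.
\]
Next I would peel off one factor from $(B)_n=(B+I)_{n-1}B$ and one from $(C)_n^{-1}=(C+I)_{n-1}^{-1}C^{-1}$, and use the commutativity of $B$ and $C$ (hence of $B$ with $(C+I)_{n-1}^{-1}$, being a rational function in $C$) to shove the factors $B$ and $C^{-1}$ to the right. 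After reindexing $m=n-1$ and pulling an $x$ outside, the sum collapses to $x\,{_2F_1}(A+I,B+I;C+I;x)\,BC^{-1}$, which is precisely \eqref{2eq1} for $s=1$.

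The general case of \eqref{2eq1} then follows by a one-line induction: applying the $s=1$ identity with $A$ replaced by $A+(s-1)I$ and substituting the inductive hypothesis for $_2F_1(A+(s-1)I,B;C;x)$ yields the claimed sum up to $k=s$. For \eqref{2eq2}, I would rewrite the $s=1$ identity in the form
\[
{_2F_1}(A-I,B;C;x)={_2F_1}(A,B;C;x)-x\,{_2F_1}(A,B+I;C+I;x)\,BC^{-1},
\]
obtained by applying \eqref{2eq1} with $s=1$ and $A$ replaced by $A-I$ (valid because $A-I+I=A$ and the invertibility of $A$ is among the standing hypotheses). An analogous induction on $s$, using the invertibility of $A-kI$ for $k\leq s$ at each step, then produces \eqref{2eq2}.

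The main obstacle is the base case $s=1$: specifically, the bookkeeping that lets one factor $BC^{-1}$ cleanly to the right of the resulting series. Because the matrices $A$, $B$, $C$ need not pairwise commute, one cannot freely reorder $(A+I)_{n-1}$ with $(B+I)_{n-1}$ or with $(C+I)_{n-1}^{-1}$; the manipulation only works because the extracted scalars $B$ and $C^{-1}$ commute with each other (by hypothesis) and because every factor they need to pass through is already to their left once $B$ and $C^{-1}$ have been pulled out of their own Pochhammer symbols. Everything else is a routine series manipulation and an induction.
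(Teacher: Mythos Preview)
Your proof is correct and follows essentially the same strategy as the paper: establish the $s=1$ contiguous relation and then telescope/iterate to reach general $s$. The only difference is in how the base case is obtained---you use the subtraction identity $(A+I)_n-(A)_n=n\,(A+I)_{n-1}$, while the paper uses $(A+I)_n=A^{-1}(A)_n(A+nI)$; your version has the small bonus of not actually needing $A$ invertible at that step, so your parenthetical remarks about invertibility in the derivation of \eqref{2eq2} are in fact superfluous under your own argument.
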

\begin{proof} 
From the definition of the Gauss hypergeometric  matrix function (\ref{1eq1}) and  the relation
\begin{align}
(A+I)_n = A^{-1} (A)_{n}(A+nI) ,\label{2eqp1}
\end{align}
we get the following contiguous relation:
\begin{align}
&_2F_{1}(A+I, B; C; x)={_2F_{1}}(A, B; C; x)+ x\,\Big[{_2F_{1}}(A+I, B+I; C+I; x)\Big]{B}{C}^{-1}.
\label{2eqp2}
\end{align}
Replacing $A$ with $A+I$ in (\ref{2eqp2}) and using (\ref{2eqp2}), we have the following contiguous relation for $_2F_{1}(A+2I, B; C; x)$:
\begin{align}
&_2F_{1}(A+2I, B; C; x)={_2F_{1}}(A, B; C; x)+ x\left[\sum_{k=1}^{2}{_2F_{1}}(A+kI, B+I; C+I; x)\right]{B}{C}^{-1}.
\label{2eqp3}
\end{align}
Iterating this process $s$ times, we get (\ref{2eq1}).  For the proof of (\ref{2eq2}) replace the matrix $A$ with $A-I$ in (\ref{2eqp2}). As $A-I$ is invertible, we have
\begin{align}
&_2F_{1}(A-I, B; C; x)={_2F_{1}}(A, B; C; x)- x \,\Big[{_2F_{1}}(A, B+I; C+I; x)\Big]{B}{C}^{-1}.\label{2eqp4}
\end{align}
Iteratively, we get (\ref{2eq2}).
\end{proof}

Using contiguous relations (\ref{2eqp2}) and (\ref{2eqp4}), we get following forms of the recursion formulas for $_2F_{1}(A, B; C; x)$.
\begin{theorem}\label{rth21}
Let $A+sI$ be an invertible matrix for all   integers $s\geq0$ and let $B$, $C$ be commuting matrices. Then the following recursion formula holds true for the Gauss hypergeometric matrix function $_2F_{1}(A, B; C; x)$: 
\begin{align}
&{_2F_{1}}(A+sI, B; C; x)=\sum_{k=0}^{s}{s\choose k}\, x^{k}\,\Big[{_2F_{1}}(A+kI, B+kI; C+kI; x)\Big]{(B)_{k}}{(C)^{-1}_{k}},
\label{2eq3}
\end{align}
Furthermore, if $A-kI$ is invertible for  integers  $k\leq s$, then
\begin{align}
&{_2F_{1}}(A-sI, B; C; x)=\sum_{k=0}^{s}{s\choose k}\, (-x)^{k}\,\Big[{_2F_{1}}(A, B+kI; C+kI; x)\Big]{(B)_{k}}{(C)^{-1}_{k}}.\label{2eq4}
\end{align}
\end{theorem}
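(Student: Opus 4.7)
My plan is to prove both parts of Theorem \ref{rth21} by induction on $s$, using the one-step contiguous relations (\ref{2eqp2}) and (\ref{2eqp4}) from the proof of Theorem \ref{rth12} as the base mechanism. The case $s=0$ is trivial since $\binom{0}{0}=1$ and $(B)_0 = (C)^{-1}_0 = I$, so both sides of (\ref{2eq3}) and (\ref{2eq4}) collapse to ${_2F_1}(A, B; C; x)$.

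For the inductive step of (\ref{2eq3}), I would replace $A$ by $A+sI$ in (\ref{2eqp2}) to obtain
\begin{align*}
{_2F_1}(A+(s+1)I, B; C; x) &= {_2F_1}(A+sI, B; C; x) \\
&\quad + x\,\Big[{_2F_1}(A+(s+1)I, B+I; C+I; x)\Big]\, B\, C^{-1}.
\end{align*}
The first summand is expanded by the inductive hypothesis (\ref{2eq3}) at level $s$ directly. The second summand is expanded by writing $A+(s+1)I = (A+I)+sI$ and invoking (\ref{2eq3}) at level $s$ with the substitutions $A \to A+I$, $B \to B+I$, $C \to C+I$. After multiplying the resulting sum by $xBC^{-1}$ and reindexing $k \mapsto k-1$, the key algebraic collapse
\begin{align*}
(B+I)_{k-1}\, B \,(C+I)^{-1}_{k-1}\, C^{-1} = (B)_{k}\, (C)^{-1}_{k}
\end{align*}
brings both expansions into a common form. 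This identity rests on the hypothesis that $B$ and $C$ commute, which lets every factor built from $B$ commute with every factor built from $C$ (and their inverses), so the products can be reassembled into the canonical order.

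At this point the two sums share the same matrix factor ${_2F_1}(A+kI, B+kI; C+kI; x) (B)_k (C)^{-1}_k x^k$, and their scalar coefficients $\binom{s}{k}$ and $\binom{s}{k-1}$ combine by Pascal's rule $\binom{s}{k}+\binom{s}{k-1}=\binom{s+1}{k}$ to produce (\ref{2eq3}) at level $s+1$. The proof of (\ref{2eq4}) is the mirror image: start from (\ref{2eqp4}) with $A$ replaced by $A-sI$, apply the inductive hypothesis to each piece, and observe that the leading $-x$ in (\ref{2eqp4}) converts $(-x)^k$ at level $s$ into $(-x)^{k+1}$ in the reindexed sum, so the sign pattern propagates correctly. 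The same commutativity argument and Pascal identity complete the induction. The principal obstacle in both arguments is the noncommutative bookkeeping: one must ensure that the coefficient matrices $(B)_k (C)^{-1}_k$ appear in the exact order demanded by the statement, and this is precisely what the hypothesis that $B$ and $C$ commute secures.
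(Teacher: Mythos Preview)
Your proof is correct and follows essentially the same inductive strategy as the paper: both arguments rest on the contiguous relation (\ref{2eqp2}) and Pascal's identity. The only organizational difference is that the paper replaces $A$ by $A+I$ in the inductive hypothesis and then applies (\ref{2eqp2}) term-by-term inside the resulting sum, whereas you apply (\ref{2eqp2}) once to split ${_2F_{1}}(A+(s+1)I,B;C;x)$ into two pieces and then invoke the hypothesis (once directly, once with shifted parameters) on each piece; the two routes regroup into the same Pascal combination.
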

\begin{proof}  
We prove (\ref{2eq3}) by applying mathematical induction on $s$. For $s=1$, the result (\ref{2eq3}) is true by (\ref{2eqp2}). Assuming (\ref{2eq3}) is true for $s=t$, that is, 
\begin{align}
&{_2F_{1}}(A+tI, B; C; x)=\sum_{k=0}^{t}{t\choose k}x^{k}\,\Big[{_2F_{1}}(A+kI, B+kI; C+kI; x)\Big]{(B)_{k}}{(C)^{-1}_{k}}.
\label{2eqpp1}
\end{align}
Replacing $A$ with $A+I$ in (\ref{2eqpp1}) and using the contiguous relation (\ref{2eqp2}), we get 
\begin{align}
&{_2F_{1}}(A+(t+1)I, B; C; x)\notag\\
&=\sum_{k=0}^{t}{t\choose k}\, x^{k}\,\Big[{_2F_{1}}(A+kI, B+kI; C+kI; x)\notag\\
&\quad +x\left({_2F_{1}}(A+(k+1)I, B+(k+1)I; C+(k+1)I; x)\right){(B+kI)}{(C+kI)^{-1}}\Big]{(B)_{k}}{(C)^{-1}_{k}}.\label{2eqpp2}
\end{align}
Simplifying, (\ref{2eqpp2}) takes the form 
\begin{align}
&{_2F_{1}}(A+(t+1)I, B; C; x)\notag\\&=\sum_{k=0}^{t+1}\left[{t\choose k}+{t\choose k-1}\right]\, x^{k}\,\{{_2F_{1}}(A+kI, B+kI; C+kI; x)\}{(B)_{k}}{(C)_{k}^{-1}}.\label{2eqpp3}
\end{align}
Apply the known relation ${n\choose k}+{n\choose k-1}={n+1\choose k}$ and ${n\choose k}=0$\, ($k>n$ or $k<0$), the above identity can be reduced to the following result:
\begin{align}
&{_2F_{1}}(A+(t+1)I, B; C; x)\notag\\
&=\sum_{k=0}^{t+1}{t+1\choose k}\, x^{k}\,\Big[{_2F_{1}}(A+kI, B+kI; C+kI; x)\Big]{(B)_{k}}{(C)_{k}^{-1}}.
\label{2eqpp4}
\end{align}
This establishes (\ref{2eq3}) for $s= t+1$.  Hence result (\ref{2eq3}) is true for all values of $s$. The second recursion formula (\ref{2eq4}) is proved in a similar manner.
\end{proof} 

The recursion formulas for ${_2F_{1}}(A, B\pm sI; C; x)$  are obtained by replacing $A \leftrightarrow B$  in Theorems~\ref{rth12} --~\ref{rth21}.\\

We now present recursion formulas for the Gauss hypergeometric matrix function  $_2F_{1}(A, B; C; x)$ about the matrix $C$. 
\begin{theorem}
Let $C-sI$ be an invertible matrix  and let  $B$, $C$ be commuting matrices.  Then the following recursion formula holds true for the Gauss hypergeometric matrix function $_2F_{1}(A, B; C; x)$:
\begin{align}
&_2F_{1}(A, B; C-sI; x)\notag\\
&={_2F_{1}}(A, B; C; x)\notag\\
&\quad + x A\sum_{k=1}^{s}
 \,\Big[{_2F_{1}}(A+I, B+I; C+(2-k)I; x) \Big]B{(C-kI)^{-1}(C-(k-1)I)^{-1}}.
\label{2eq9}
\end{align}
\end{theorem}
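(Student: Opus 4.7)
The plan is to mimic the strategy of Theorem~\ref{rth12}: first establish the single-step contiguous relation (the case $s=1$), and then iterate it $s$ times, possibly wrapped as an induction on $s$.

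The first step is to derive a clean expression for $(C-I)_n^{-1}$ in terms of $(C)_n^{-1}$. Writing $(C-I)_n = (C-I)(C)_{n-1}$ and $(C)_n = (C)_{n-1}(C+(n-1)I)$, and using that every factor here is a polynomial in $C$ (hence all commute with one another), I get
\begin{align*}
(C-I)_n^{-1} - (C)_n^{-1} &= (C)_{n-1}^{-1}\bigl[(C-I)^{-1} - (C+(n-1)I)^{-1}\bigr] \\
&= n\,(C)_{n-1}^{-1}(C+(n-1)I)^{-1}(C-I)^{-1} = n\,(C)_n^{-1}(C-I)^{-1}.
\end{align*}

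Substituting this into the series (\ref{1eq1}) for $_2F_1(A,B;C-I;x)$, shifting the index $n=m+1$, and factoring $(A)_{m+1} = A(A+I)_m$, $(B)_{m+1} = B(B+I)_m$, $(C)_{m+1}^{-1} = C^{-1}(C+I)_m^{-1}$ (all valid because $A$, $B$, $C$ each commute with their own shifts), I obtain
\begin{align*}
{_2F_1}(A,B;C-I;x) &= {_2F_1}(A,B;C;x) \\
&\quad + x\sum_{m\geq 0}\frac{A\,(A+I)_m\,B\,(B+I)_m\,C^{-1}(C+I)_m^{-1}(C-I)^{-1}}{m!}\,x^m.
\end{align*}
Here is where the hypothesis that $B$ and $C$ commute is used crucially: it lets me slide the factors $B$, $C^{-1}$, $(C+I)_m^{-1}$, $(C-I)^{-1}$ past each other freely, so the summand can be rearranged as $(A+I)_m(B+I)_m(C+I)_m^{-1}$ on the left, times $BC^{-1}(C-I)^{-1}$ on the right, yielding the base contiguous relation
\begin{align}
{_2F_1}(A,B;C-I;x) = {_2F_1}(A,B;C;x) + xA\bigl[{_2F_1}(A+I,B+I;C+I;x)\bigr]\,B\,C^{-1}(C-I)^{-1}. \label{basecontig}
\end{align}

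The second step is iteration. Replacing $C$ with $C-I$ in (\ref{basecontig}) expresses $_2F_1(A,B;C-2I;x)$ in terms of $_2F_1(A,B;C-I;x)$ and an extra term carrying the factor $(C-I)^{-1}(C-2I)^{-1}$. Doing this $s$ times produces the telescoping sum
\begin{align*}
{_2F_1}(A,B;C-sI;x) = {_2F_1}(A,B;C;x) + xA\sum_{k=1}^{s}\bigl[{_2F_1}(A+I,B+I;C+(2-k)I;x)\bigr]\,B\,(C-kI)^{-1}(C-(k-1)I)^{-1},
\end{align*}
which is exactly (\ref{2eq9}); a straightforward induction on $s$, with (\ref{basecontig}) providing both the base case ($s=1$) and the inductive step, makes this rigorous.

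The only real obstacle is bookkeeping of the non-commutative factors. Everything hinges on the single identity $(C-I)_n^{-1} - (C)_n^{-1} = n(C)_n^{-1}(C-I)^{-1}$ and on the hypothesis $BC = CB$, which together guarantee that the term picked up at each iteration has the specific two-sided shape $A\,[\,\cdot\,]\,B(C-kI)^{-1}(C-(k-1)I)^{-1}$. No commutativity between $A$ and the pair $(B,C)$ is needed, so the matrix $A$ must be kept strictly to the left throughout; this is the only place where care is required.
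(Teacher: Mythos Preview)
Your argument is correct and follows essentially the same route as the paper: you derive the Pochhammer identity $(C-I)_n^{-1}=(C)_n^{-1}[I+n(C-I)^{-1}]$ (written by you as a difference), use it to obtain the single-step contiguous relation~(\ref{basecontig}), which is exactly the paper's relation, and then iterate/telescope to reach the general $s$. The only difference is cosmetic: the paper writes out the $s=2$ step explicitly before declaring ``iterating this method $s$ times'', while you phrase the same iteration as an induction.
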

\begin{proof}  
From the definition of the Gauss hypergeometric matrix function  $_2F_{1}$ and the transformation 
\begin{align}
(C-I)^{-1}_{n}=(C)^{-1}_{n}\left[I+{n}{(C-I)^{-1}}\right],
\end{align}
we can easily get the contiguous relation
\begin{align}
&_2F_{1}(A, B; C-I; x)\notag\\
&={_2F_{1}}(A, B; C; x)+ x A \,\Big[{_2F_{1}}(A+I, B+I; C+I; x)\Big]B{C^{-1}(C-I)^{-1}}.
\label{2eqp9}
\end{align}
Applying this contiguous relation twice on the Gauss hypergeometric matrix function  $_2F_{1}$ with matrix $C-2I$, we have
\begin{align}
&_2F_{1}(A, B; C-2I; x)\notag\\
&={_2F_{1}}(A, B; C-I; x)+ x A \,\Big[{_2F_{1}}(A+I, B+I; C; x)\Big]B{(C-I)^{-1}(C-2I)^{-1}}\notag\\
&={_2F_{1}}(A, B; C; x)+ x A \Big[{{_2F_{1}}(A+I, B+I; C+I; x)}B{C^{-1}(C-I)^{-1}}\notag\\
&\quad +\,{{_2F_{1}}(A+I, B+I; C; x)}B{(C-I)^{-1}(C-2I)^{-1}}\Big].
\label{2eqp19}
\end{align}
Iterating this method $s$ times on the  Gauss hypergeometric matrix function  $_2F_{1}(A, B; C-sI; x)$, we get the recursion formula (\ref{2eq9}).
\end{proof} 

\section{Recursion formulas for the Appell  matrix functions}
In this section, we obtain recursion formulas  for the  Appell  matrix functions. 
\begin{theorem}
Let $A+sI$ be an invertible matrix for all  integers $s\geq0$ and let $AB=BA$ and  $B'C=CB'$ then the following recursion formula holds true for the  Appell matrix function $F_{1}$:
\begin{align}
&F_{1}(A+sI, B, B'; C; x, y)\notag\\
&=F_{1}(A, B, B'; C; x, y)+ x{B}\left[\sum_{k=1}^{s}F_{1}(A+kI, B+I, B'; C+I; x, y)\right]{C}^{-1}\notag\\
&\quad +y\left[\sum_{k=1}^{s}F_{1}(A+kI, B, B'+I; C+I; x, y)\right]{B'}{C}^{-1}.
\label{3eq1}
\end{align}
Furthermore, if $A-kI$ is invertible for  integers $k\leq s$, then
\begin{align}
&F_{1}(A-sI, B, B'; C; x, y)\notag\\
&=F_{1}(A, B, B'; C; x, y)- x{B}\left[\sum_{k=0}^{s-1}F_{1}(A-kI, B+I, B'; C+I; x, y)\right]{C}^{-1}\notag\\
&\quad -y\left[\sum_{k=0}^{s-1}F_{1}(A-kI, B, B'+I; C+I; x, y)\right]{B'}{C}^{-1}.\label{3eq2}
\end{align}
\end{theorem}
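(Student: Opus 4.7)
The plan is to mirror the inductive argument of Theorem~\ref{rth12}, carried out in two variables. I would first derive a basic contiguous relation shifting only $A$ by $I$, and then iterate.

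Starting from the definition~(\ref{1eq2}) and applying the identity~(\ref{2eqp1}) with $n$ replaced by $m+n$, one obtains
\begin{align*}
(A+I)_{m+n}&=A^{-1}(A)_{m+n}(A+(m+n)I)=(A)_{m+n}+m\,A^{-1}(A)_{m+n}+n\,A^{-1}(A)_{m+n},
\end{align*}
since $A^{-1}$ commutes with the polynomial $(A)_{m+n}$ in $A$. Substituting this into the series for $F_1(A+I,B,B';C;x,y)$ splits it as a sum of three double series. The first reproduces $F_1(A,B,B';C;x,y)$. In the second I would shift $m\mapsto m+1$ and use the identities $A^{-1}(A)_{m+n+1}=(A+I)_{m+n}$, $(B)_{m+1}=B(B+I)_m$, and $(C)^{-1}_{m+n+1}=(C+I)^{-1}_{m+n}C^{-1}$; the hypothesis $AB=BA$ would then allow me to pull $B$ past $(A+I)_{m+n}$ to the far left, producing $xB\,F_1(A+I,B+I,B';C+I;x,y)\,C^{-1}$. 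The third series is handled symmetrically with $n\mapsto n+1$, and $B'C=CB'$ would let me pull $B'$ past $(C+I)^{-1}_{m+n}$, yielding $y\,F_1(A+I,B,B'+I;C+I;x,y)\,B'C^{-1}$. Assembling these pieces gives the basic contiguous relation
\begin{align*}
F_1(A+I,B,B';C;x,y)&=F_1(A,B,B';C;x,y)\\
&\quad +xB\,F_1(A+I,B+I,B';C+I;x,y)\,C^{-1}\\
&\quad +y\,F_1(A+I,B,B'+I;C+I;x,y)\,B'C^{-1}.
\end{align*}

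With this in hand, (\ref{3eq1}) would follow by induction on $s$: substituting the contiguous relation into itself with $A$ replaced by $A+I,A+2I,\dots,A+(s-1)I$ successively, the base term $F_1(A,B,B';C;x,y)$ persists while the two correction terms accumulate into the telescoping sums $\sum_{k=1}^{s}$ displayed in the statement. Formula~(\ref{3eq2}) is obtained by running the same argument in reverse: replace $A$ by $A-I$ in the basic contiguous relation (permissible since $A-I$ is invertible) and rearrange to express $F_1(A-I,\dots)$ as $F_1(A,\dots)$ minus the two correction terms evaluated at $A$; iterating with $A\mapsto A-I,A-2I,\dots$ produces the downward recursion.

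The main obstacle is purely the non-commutative bookkeeping. The hypotheses $AB=BA$ and $B'C=CB'$ are the minimal commutativity required to move $B$ to the far left and to move $B'$ to the slot just before $C^{-1}$ on the right; I would have to verify carefully that no further commutations are tacitly invoked, in particular that $A$ need not commute with $C$, nor $B$ with $B'$, etc. Beyond this, the induction and the telescoping of the two summations are entirely routine.
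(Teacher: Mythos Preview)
Your proposal is correct and follows essentially the same route as the paper: derive the single-step contiguous relation for $F_1(A+I,\dots)$ from the Pochhammer identity $(A+I)_{m+n}=A^{-1}(A)_{m+n}(A+(m+n)I)$, then iterate (respectively, shift $A\mapsto A-I$ and iterate) to obtain the telescoping sums in (\ref{3eq1}) and (\ref{3eq2}). Your treatment is in fact more explicit than the paper's about where the hypotheses $AB=BA$ and $B'C=CB'$ enter, and your check that no further commutations are needed is accurate.
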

\begin{proof}  Using the definition of  the Appell matrix function $F_{1}$ and the transformation
\begin{align}\notag(A+I)_{m+n}= A^{-1}(A)_{m+n}\left(A+{mI}+{nI}\right)\end{align}
we obtain the following contiguous relation:
\begin{align}
&F_{1}(A+I, B, B'; C; x, y)\notag\\
&=F_{1}(A, B, B'; C; x, y)+ x{B}\Big[F_{1}(A+I, B+I, B'; C+I; x, y)\Big]{C}^{-1}\notag\\
&\quad +y\Big[F_{1}(A+I, B, B'+I; C+I; x, y)\Big]{B'}{C}^{-1}.
\label{3eqp1}
\end{align}                                                             
To calculate contiguous  relation for  $F_{1}(A+2I, B, B'; C; x, y)$,  we replace $A$ with $A+I$ in (\ref{3eqp1}) and use (\ref{3eqp1}). This gives 
\begin{align}
&F_{1}(A+2I, B, B'; C; x, y)\notag\\
&=F_{1}(A, B, B'; C; x, y)\notag\\
&\quad + x{B}\Big[F_{1}(A+I, B+I, B'; C+I; x, y) +F_{1}(A+2I, B+I, B'; C+I; x, y)\Big] {C}^{-1}\notag\\
&\quad+y\Big[F_{1}(A+I, B, B'+I; C+I; x, y) 
+F_{1}(A+2I, B, B'+I; C+I; x, y)\Big]{B'}{C}^{-1}.
\label{3eqp2}
\end{align}  
Iterating this process $s$ times,  we obtain (\ref{3eq1}). For  the proof of (\ref{3eq2}),  replace the matrix  $A$ with $A-I$ in (\ref{3eqp1}). As $A-I$ is invertible, this gives 
\begin{align}
&F_{1}(A-I, B, B'; C; x, y)\notag\\
&=F_{1}(A, B, B'; C; x, y)- x{B}\Big[F_{1}(A, B+I, B'; C+I; x, y)\Big]{C}^{-1}\notag\\
&\quad -y\Big[F_{1}(A, B, B'+I; C+I; x, y)\Big]{B'}{C}^{-1}.
\label{3eqp21}
\end{align}
Iteratively, we get (\ref{3eq2}).
\end{proof} 

From the contiguous relations (\ref{3eqp1})   and  (\ref{3eqp21}), we can write the recursion formulas for Appell  matrix function $F_{1}$ in the following forms.
\begin{theorem}
Let $A+sI$ be an invertible matrix for all  integers $s\geq0$ and  let $AB=BA$ and  $B'C=CB'$ then the the following recursion formula holds true for the  Appell matrix function $F_{1}$:
\begin{align}
&F_{1}(A+sI, B, B'; C; x, y)\notag\\
&=\sum_{k_1+k_2\leq s}^{}{s\choose k_1, k_2}(B)_{k_1}\, x^{k_1} y^{k_2}\,\notag\\
&\quad\times\Big[{F_{1}}(A+(k_1+k_2)I, B+k_1I, B'+k_2I; C+(k_1+k_2)I; x, y)\Big](B')_{k_2}{(C)^{-1}_{k_1+k_2}};
\label{3eq3}
\end{align}
Furthermore, if $A-kI$ is invertible for  integers $k\leq s$, then 
\begin{align}
&F_{1}(A-sI, B, B'; C; x, y)\notag\\
&=\sum_{k_1+k_2\leq s}^{}{s\choose k_1, k_2}(B)_{k_1}\, (-x)^{k_1} (-y)^{k_2}\,\notag\\
&\quad\times\Big[{F_{1}}(A, B+k_1I, B'+k_2I; C+(k_1+k_2)I; x, y)\Big](B')_{k_2}{(C)^{-1}_{k_1+k_2}},
\label{3eq4}
\end{align}
where ${s\choose k_1, k_2}=\frac{s!}{k_1!k_2!(s-k_1-k_2)!}$.
\end{theorem}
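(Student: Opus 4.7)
The plan is to prove~(\ref{3eq3}) by induction on $s$, mirroring the argument used for Theorem~\ref{rth21}; the base case $s=1$ coincides with the contiguous relation~(\ref{3eqp1}), since the only multi-indices with $k_1+k_2\leq 1$ are $(0,0),(1,0),(0,1)$ and each carries trinomial coefficient equal to one.

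For the inductive step I would assume~(\ref{3eq3}) at level $s=t$ and replace $A$ by $A+I$ throughout. This expresses $F_{1}(A+(t+1)I,B,B';C;x,y)$ as a sum over $k_1+k_2\leq t$ of terms whose inner $F_{1}$ has first argument $A+(k_1+k_2+1)I$. I would then apply~(\ref{3eqp1}), with the substitutions $A\to A+(k_1+k_2)I$, $B\to B+k_1I$, $B'\to B'+k_2I$, $C\to C+(k_1+k_2)I$, to each of these inner $F_{1}$'s; the shifted commutativity hypotheses are equivalent to the original ones, so the shifted relation remains valid. This breaks the hypothesis into three sums.

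Next I would use the identities $(B)_{k_1}(B+k_1I)=(B)_{k_1+1}$, $(B'+k_2I)(B')_{k_2}=(B')_{k_2+1}$, and $(C+(k_1+k_2)I)^{-1}(C)^{-1}_{k_1+k_2}=(C)^{-1}_{k_1+k_2+1}$ to absorb the new factors produced by the contiguous relation into enlarged shifted factorials, invoking $B'C=CB'$ to slide $(B'+k_2I)$ past the $C$-dependent factors. After re-indexing $k_1\mapsto k_1-1$ in the second sum and $k_2\mapsto k_2-1$ in the third, all three sums range over $k_1+k_2\leq t+1$, and their coefficients at the generic $(k_1,k_2)$-term combine as
\[
\binom{t}{k_1,k_2}+\binom{t}{k_1-1,k_2}+\binom{t}{k_1,k_2-1}=\binom{t+1}{k_1,k_2},
\]
by the trinomial Pascal identity (with the convention that binomials with a negative lower index vanish). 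This closes the induction and proves~(\ref{3eq3}). Formula~(\ref{3eq4}) follows from the same scheme applied to the lowering relation~(\ref{3eqp21}); the minus signs produced at each step accumulate as $(-x)^{k_1}(-y)^{k_2}$, and the binomial combinatorics are identical.

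I expect the main obstacle to be the noncommutative bookkeeping when recombining the three sums: the fresh $(B+k_1I)$ sitting on the left of the inner $F_{1}$ must merge with the pre-existing $(B)_{k_1}$ on the far left, while the fresh $(B'+k_2I)$ on its right must merge with $(B')_{k_2}$ situated further right of $(C+(k_1+k_2)I)^{-1}$. This forces a careful use of $AB=BA$ (so that $B$ commutes with the $A$-dependence of the inner $F_{1}$ via the matrix functional calculus) and of $B'C=CB'$ (so that $(C+(k_1+k_2)I)^{-1}$ can be slid past $(B')_{k_2}$ before being absorbed into $(C)^{-1}_{k_1+k_2+1}$). Once this ordering discipline is respected, the remainder is a routine combinatorial computation.
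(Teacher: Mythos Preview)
Your proposal is correct and follows essentially the same route as the paper: induction on $s$ with base case~(\ref{3eqp1}), the inductive step carried out by replacing $A\to A+I$, expanding each inner $F_1$ via~(\ref{3eqp1}), re-indexing, and closing with the trinomial Pascal identity. Your treatment of the noncommutative bookkeeping is in fact more explicit than the paper's; note, however, that the merging of $(B+k_1I)$ with $(B)_{k_1}$ needs no commutation hypothesis since only the scalars $x^{k_1}y^{k_2}$ lie between them---the assumption $AB=BA$ is used only to ensure the shifted contiguous relation~(\ref{3eqp1}) itself remains valid.
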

\begin{proof} The proof of (\ref{3eq3}) is based upon the principle of mathematical induction on  $s\in\mathbb{N}$. For $s=1$, the result (\ref{3eq3}) is true obviously. Suppose  (\ref{3eq3}) is true for $s=t$, that is,
 \begin{align}
&F_{1}(A+tI, B, B'; C; x, y)\notag\\
&=\sum_{k_1+k_2\leq t}^{}{t\choose k_1, k_2}(B)_{k_1}\, x^{k_1} y^{k_2}\,\notag\\
 &\quad\times{F_{1}}(A+(k_1+k_2)I, B+k_1I, B'+k_2I; C+(k_1+k_2)I; x, y)(B')_{k_2}{(C)^{-1}_{k_1+k_2}},
 \label{3eqp3}
 \end{align}
 Replacing $A$ with $A+I$ in (\ref{3eqp3}) and using the contiguous relation (\ref{3eqp1}), we get
 \begin{align}
 &F_{1}(A+tI+I, B, B'; C; x, y)\notag\\
 &=\sum_{k_1+k_2\leq t}^{}{t\choose k_1, k_2}(B)_{k_1}\, x^{k_1} y^{k_2}\,\notag\\
 &\quad\times\Big[{F_{1}}(A+(k_1+k_2)I, B+k_1I, B'+k_2I; C+(k_1+k_2)I; x, y)+x{(B+k_1I)}\notag\\
 &\quad\times{F_{1}}(A+(k_1+k_2)I+I, B+k_1I+I, B'+k_2I; C+(k_1+k_2)I+I; x, y){(C+(k_1+k_2)I)^{-1}}\notag\\
 &\quad\times+y {F_{1}}(A+(k_1+k_2)I+I, B+k_1I, B'+k_2I+I; C+(k_1+k_2)I+I; x, y)\notag\\ &\quad\times{(B'+k_2I)}{(C+(k_1+k_2)I)^{-1}}\Big](B')_{k_2}{(C)^{-1}_{k_1+k_2}}.
 \label{3eqp4}
 \end{align}
 Simplifying, (\ref{3eqp4}) takes the form
 \begin{align}
 &F_{1}(A+tI+I, B, B'; C; x, y)\notag\\
 &=\sum_{k_1+k_2\leq t}^{}{t\choose k_1, k_2}(B)_{k_1}\, x^{k_1} y^{k_2}\,\notag\\
 &\quad\times\Big[{F_{1}}(A+(k_1+k_2)I, B+k_1I, B'+k_2I; C+(k_1+k_2)I; x, y)\notag\\
 &\quad +\sum_{k_1+k_2\leq t+1}^{}{t\choose k_1-1, k_2}{F_{1}}(A+(k_1+k_2)I, B+k_1I, B'+k_2I; C+(k_1+k_2)I; x, y)\notag\\
 &\quad +\sum_{k_1+k_2\leq t+1}^{}{t\choose k_1, k_2-1}\notag\\&\times{F_{1}}(A+(k_1+k_2)I, B+k_1I, B'+k_2I; C+(k_1+k_2)I; x, y)\Big](B')_{k_2}{(C)^{-1}_{k_1+k_2}}.
 \label{3eqp5}
 \end{align}
 Using Pascal's identity in (\ref{3eqp5}), we have
 \begin{align}
& F_{1}(A+(t+1)I, B, B'; C; x, y)\notag\\
&=\sum_{k_1+k_2\leq t+1}^{}{t+1\choose k_1, k_2}(B)_{k_1}\, x^{k_1} y^{k_2}\,\notag\\
 &\quad\times{F_{1}}(A+(k_1+k_2)I, B+k_1I, B'+k_2I; C+(k_1+k_2)I; x, y)(B')_{k_2}{(C)^{-1}_{k_1+k_2}}.
 \end{align}
 This establishes (\ref{3eq3}) for $s=t+1$. Hence by induction, result given in (\ref{3eq3}) is true for all values of $s$. The second recursion formula (\ref{3eq4}) can be proved in a similar manner.
 \end{proof} 
 Now, we present the recursion formulas for the parameters $B$, $B'$ of the Appell matrix function $F_{1}$. We omit the proofs of the given below theorems.
\begin{theorem}
Let $B+sI$ and $B'+sI$ be invertible matrices for all  integers $s\geq0$. Then following recursion formulas hold true for the  Appell matrix function $F_{1}$:
\begin{align}
&F_{1}(A, B+sI, B'; C; x, y)\notag\\
&=F_{1}(A, B, B'; C; x, y)+ x{A}\Big[\sum_{k=1}^{s}F_{1}(A+I, B+kI, B'; C+I; x, y)\Big]{C}^{-1};
\label{3eq5}
\end{align}
\begin{align}
&F_{1}(A, B, B'+sI; C; x, y)\notag\\
&=F_{1}(A, B, B'; C; x, y)+ y{A}\Big[\sum_{k=1}^{s}F_{1}(A+I, B, B'+kI; C+I; x, y)\Big]{C}^{-1}.
\label{3eq7}
\end{align}
Furthermore, if $B-kI$ and $B'-kI$ are invertible for  integers $k\leq s$, then
\begin{align}
&F_{1}(A, B-sI, B'; C; x, y)\notag\\
&=F_{1}(A, B, B'; C; x, y)- x{A}\Big[\sum_{k=0}^{s-1}F_{1}(A+I, B-kI, B'; C+I; x, y)\Big]{C}^{-1};
\label{3eq6}
\end{align}
\begin{align}
&F_{1}(A, B, B'-sI; C; x, y)\notag\\
&=F_{1}(A, B, B'; C; x, y)- y{A}\Big[\sum_{k=0}^{s-1}F_{1}(A+I, B, B'-kI;  C+I; x, y)\Big]{C}^{-1}.\label{3eq8}
\end{align}
\end{theorem}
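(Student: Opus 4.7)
The plan is to follow the same two-step template used in the preceding theorems: first establish a contiguous relation for a single unit shift (the $s=1$ case), and then iterate it $s$ times. Since the shifts now occur in the parameters $B$ and $B'$ rather than in $A$, the key algebraic input is a Pochhammer shift identity for $(B+I)_m$ (respectively $(B'+I)_n$) in place of the identity (\ref{2eqp1}) used earlier for $(A+I)_{m+n}$.

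For (\ref{3eq5}) the target contiguous relation is
\begin{align*}
F_1(A, B+I, B'; C; x, y) = F_1(A, B, B'; C; x, y) + x\, A \Big[F_1(A+I, B+I, B'; C+I; x, y)\Big] C^{-1}.
\end{align*}
To derive it, substitute $(B+I)_m = B^{-1}(B)_m(B+mI)$ (legitimate because $B$ itself is invertible by taking $s=0$ in the hypothesis) into the series (\ref{1eq2}), and split $B+mI = B + mI$. The piece coming from the summand $B$ simplifies via $B^{-1}(B)_m B = (B)_m$ — clean because $(B)_m$ and $B^{-1}$ are both polynomials in $B$ and hence commute — and reproduces $F_1(A, B, B'; C; x, y)$. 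The piece coming from $mI$ vanishes at $m=0$; after reindexing $m\mapsto m+1$ and pulling out an $x$, the identity $B^{-1}(B)_{m+1} = (B+I)_m$ combines with the standard factorizations $(A)_{m+n+1} = A\,(A+I)_{m+n}$ and $(C)^{-1}_{m+n+1} = (C+I)^{-1}_{m+n}\,C^{-1}$ to collapse the sum into $xA\, F_1(A+I, B+I, B'; C+I; x, y)\,C^{-1}$. Because $A$ sits on the far left and $C^{-1}$ on the far right of every monomial, no commutativity assumption between $A, B, B', C$ is required beyond what is already polynomial-in-one-variable.

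Iterating the contiguous relation by successively replacing $B$ by $B+I,\, B+2I,\, \ldots,\, B+(s-1)I$ and telescoping yields (\ref{3eq5}); the hypothesis ensures each intermediate $B+kI$ is invertible. For (\ref{3eq6}), replace $B$ by $B-I$ in the contiguous relation — valid since $B-I$ is assumed invertible — and iterate $s$ times. The formulas (\ref{3eq7}) and (\ref{3eq8}) for shifts in $B'$ follow by the same argument applied to $(B'+I)_n = (B')^{-1}(B')_n(B'+nI)$ with the reindexing $n\mapsto n+1$, which produces a factor of $y$ rather than $x$.

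There is no serious obstacle; the work is purely bookkeeping. The only subtlety worth emphasizing is that the non-commutativity of $A, B, B', C$ forces care about the side on which each factor is extracted. Fortunately, the factorizations $(A)_{m+n+1} = A(A+I)_{m+n}$ and $(C)^{-1}_{m+n+1} = (C+I)^{-1}_{m+n} C^{-1}$ place $A$ on the left and $C^{-1}$ on the right of the reassembled $F_1$ without invoking any hidden commutation, which is exactly why the theorem can be stated with no explicit commutativity hypothesis on $A, B, B', C$.
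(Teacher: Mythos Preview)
Your proposal is correct and follows precisely the template the paper uses for the analogous results (the paper in fact omits the proof of this theorem, but your derivation of the single-shift contiguous relation via $(B+I)_m = B^{-1}(B)_m(B+mI)$ and subsequent telescoping mirrors exactly the argument given for Theorem~4.1). Your observation that $A$ lands on the far left and $C^{-1}$ on the far right, so that no extra commutativity hypotheses are needed here, is spot on and explains why the statement carries none.
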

\begin{theorem}
Let $B+sI$ and $B'+sI$ be invertible matrices for all $s\geq0$. Then following recursion formulas hold true for the  Appell matrix function $F_{1}$:
\begin{align}
&F_{1}(A, B+sI, B'; C; x, y)\notag\\
&=\sum_{k_1=0}^{s}{s\choose k_1}{(A)_{k_1}}\, x^{k_1}   \Big[\ {F_{1}}(A+k_1I, B+k_1I, B'; C+k_1I; x, y)\Big]{(C)^{-1}_{k_1}};
\label{3eq9}
\end{align}
\begin{align}
&F_{1}(A, B, B'+sI; C; x, y)\notag\\
&=\sum_{k_1=0}^{s}{s\choose k_1}{(A)_{k_1}}\, y^{k_1} \Big[\ {F_{1}}(A+k_1I, B, B'+k_1 I;  C+k_1I; x, y)\Big]{(C)^{-1}_{k_1}}.
\label{3eq11}
\end{align}
Furthermore, if $B-kI$ and $B'-kI$ are invertible for  integers $k\leq s$, then
\begin{align}
&F_{1}(A, B-sI, B'; C; x, y)\notag\\
&=\sum_{k_1=0}^{s}{s\choose k_1}{(A)_{k_1}}\, (-x)^{k_1} \Big[{F_{1}}(A+k_1I, B, B'; C+k_1I; x, y)\Big]{(C)^{-1}_{k_1}};
\label{3eq10}
\end{align}
\begin{align}
&F_{1}(A, B, B'-sI; C; x, y)\notag\\
&=\sum_{k_1=0}^{s}{s\choose k_1}{(A)_{k_1}}\, (-y)^{k_1} \Big[{F_{1}}(A+k_1I, B, B';  C+k_1I; x, y)\Big]{(C)^{-1}_{k_1}}.
\label{3eq12}
\end{align}
\end{theorem}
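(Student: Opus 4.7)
The plan is to prove the binomial-form recursions (\ref{3eq9})--(\ref{3eq12}) by induction on $s$, exactly in the spirit of Theorem~\ref{rth21} for $_2F_{1}$, feeding a one-step contiguous relation for the $B$-parameter (respectively $B'$-parameter) into the inductive hypothesis and then collapsing the resulting two sums by Pascal's identity.

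First I would establish the single-step contiguous relation that the induction needs, namely
\begin{align*}
F_{1}(A, B+I, B'; C; x, y) = F_{1}(A, B, B'; C; x, y) + x A\,\bigl[F_{1}(A+I, B+I, B'; C+I; x, y)\bigr]\,C^{-1},
\end{align*}
together with its $B'$-analogue and the corresponding $B\to B-I$, $B'\to B'-I$ versions. Each of these can be read off from the series (\ref{1eq2}) using the identity $(B+I)_{m}=(B)_{m}+m\,(B)_{m}B^{-1}$ (valid because $B$ commutes with itself), after which the shift $m\mapsto m+1$ converts the correction term into an $F_{1}$ with shifted parameters. The commutation hypotheses $AB=BA$ (needed so that $A$ can be pulled past $(B)_{m}$ on the left) are the same ones that underlie (\ref{3eqp1}) and are already in force in the ambient section.

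The induction itself proceeds as in the proof of Theorem~\ref{rth21}. The base $s=1$ is exactly the contiguous relation above. Assuming (\ref{3eq9}) holds for $s=t$, I would replace $B$ by $B+I$ in the induction hypothesis (so the left-hand side becomes $F_{1}(A, B+(t+1)I, B'; C; x,y)$ and each inner $F_{1}$ has $B+k_{1}I$ replaced by $B+(k_{1}+1)I$) and then apply the contiguous relation \emph{in} the parameter $B+kI$ to each summand. Using the absorption identities $(A)_{k}(A+kI)=(A)_{k+1}$ and $(C)_{k+1}^{-1}=(C+kI)^{-1}(C)_{k}^{-1}$, the correction piece becomes, after the reindexing $k\mapsto k-1$, a sum with coefficient $\binom{t}{k-1}(A)_{k}x^{k}(C)_{k}^{-1}$, while the main piece carries $\binom{t}{k}(A)_{k}x^{k}(C)_{k}^{-1}$. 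Adding the two and invoking $\binom{t}{k}+\binom{t}{k-1}=\binom{t+1}{k}$ (with the convention that out-of-range binomial coefficients vanish) yields (\ref{3eq9}) at $s=t+1$.

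The other three formulas are proved by the same template: (\ref{3eq10}) uses the $B\to B-I$ contiguous relation (where the sign $(-x)^{k}$ comes from the minus in that relation, mirroring the passage from (\ref{2eq3}) to (\ref{2eq4})), and (\ref{3eq11}), (\ref{3eq12}) are obtained by the $B\leftrightarrow B'$ symmetric versions under the commutativity $B'C=CB'$, with $x$ replaced by $y$ throughout. The only nontrivial point in the bookkeeping---and the step most prone to slippage---is maintaining the correct left/right placement of the matrix factors $(A)_{k}$, $(C)_{k}^{-1}$, $B$, $B'$ around the inner $F_{1}$; once the contiguous relation is written with $A$ (or $(A+kI)$) on the left and $C^{-1}$ (or $(C+kI)^{-1}$) on the right, all telescoping and Pascal collapses are formal.
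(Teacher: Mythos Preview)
Your proposal is correct and follows precisely the template the paper establishes in Theorem~\ref{rth21}: derive the one-step contiguous relation in the $B$ (respectively $B'$) parameter, then induct on $s$ and collapse via Pascal's identity. The paper in fact omits the proof of this theorem, so your argument is exactly the intended one.

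One small correction: you invoke the hypothesis $AB=BA$ as ``already in force in the ambient section,'' but the theorem as stated carries no such commutativity assumption, and none is needed. In the derivation of the contiguous relation
\[
F_{1}(A, B+I, B'; C; x, y) = F_{1}(A, B, B'; C; x, y) + x\,A\bigl[F_{1}(A+I, B+I, B'; C+I; x, y)\bigr]C^{-1},
\]
the factor $A$ emerges on the left directly from $(A)_{m+n+1}=A\,(A+I)_{m+n}$ and $C^{-1}$ on the right from $(C)_{m+n+1}^{-1}=(C+I)_{m+n}^{-1}C^{-1}$; nothing has to be commuted past $(B)_{m}$. Likewise, in the inductive step the absorptions $(A)_{k}(A+kI)=(A)_{k+1}$ and $(C+kI)^{-1}(C)_{k}^{-1}=(C)_{k+1}^{-1}$ involve only $A$-factors on the left and $C$-factors on the right, so the bookkeeping you flag as ``most prone to slippage'' in fact goes through without any extra commutativity.
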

\begin{theorem}
Let $C-sI$ be an invertible matrix for all  integers $s\geq0$ and let $AB=BA$,  $B'C=CB'$ . Then the following recursion formula holds true for the  Appell matrix function $F_{1}$:
\begin{align}
&F_{1}(A, B, B'; C-sI; x, y)\notag\\
&=F_{1}(A, B, B'; C; x, y)+ x AB\sum_{k=1}^{s}\Big[ F_{1}(A+I, B+I, B'; C+(2-k)I; x, y)\Big]\notag\\
&\times{(C-kI)^{-1}(C-(k-1)I)^{-1}}+ y A\sum_{k=1}^{s}\Big[ F_{1}(A+I, B, B'+I; C+(2-k)I; x, y)\Big]\notag\\
&\times B'{(C-kI)^{-1}(C-(k-1)I)^{-1}}.\label{3eq13}
\end{align}
\end{theorem}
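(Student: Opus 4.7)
The plan is to mirror the proof of the earlier $C$-recursion for ${}_2F_{1}$ (relations (\ref{2eqp9})--(\ref{2eqp19})): first establish a one-step contiguous relation for $F_{1}(A, B, B'; C-I; x, y)$, and then iterate it $s$ times, replacing $C$ by $C-I$, $C-2I$, $\ldots$, $C-(s-1)I$ in turn.

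To obtain the contiguous relation, I start from the series (\ref{1eq2}) and apply the generalised transformation
\begin{align*}
(C-I)^{-1}_{m+n} = (C)^{-1}_{m+n}\bigl[I + (m+n)(C-I)^{-1}\bigr],
\end{align*}
which is legitimate because all powers and shifts of $C$ commute among themselves. Substituting this into $F_{1}(A, B, B'; C-I; x, y)$ and splitting $m+n = m + n$ produces $F_{1}(A, B, B'; C; x, y)$ plus two sums. In the $m$-sum I reindex $m \mapsto m+1$, pull out $x$, use $(A)_{m+n+1} = A(A+I)_{m+n}$, $(B)_{m+1} = B(B+I)_{m}$, and $(C)^{-1}_{m+n+1} = (C+I)^{-1}_{m+n}C^{-1}$, then use the hypothesis $AB = BA$ (which also gives $B(A+I)_{m+n} = (A+I)_{m+n}B$) to drag the $B$ to the left next to $A$; this recovers exactly $xAB\,[F_{1}(A+I, B+I, B'; C+I; x, y)]\,C^{-1}(C-I)^{-1}$. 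The $n$-sum is handled analogously via $n \mapsto n+1$ and $(B')_{n+1} = B'(B'+I)_{n}$, except that here the hypothesis $B'C = CB'$ (and therefore $B'(C+kI)^{-1} = (C+kI)^{-1}B'$ for every integer $k$) is used to move $B'$ to the right of the series, yielding the term $yA\,[F_{1}(A+I, B, B'+I; C+I; x, y)]\,B'C^{-1}(C-I)^{-1}$. This is the $s=1$ case of (\ref{3eq13}).

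With the contiguous relation in hand, the iteration is straightforward: replacing $C$ by $C-I$ everywhere gives the same identity with $C$ shifted, which expresses $F_{1}(A, B, B'; C-2I; x, y)$ in terms of $F_{1}(A, B, B'; C-I; x, y)$ and two $F_{1}$-terms on parameter $C$, with the denominator factor $(C-I)^{-1}(C-2I)^{-1}$. Substituting the first identity inside gives precisely the $s=2$ case of (\ref{3eq13}). Inductively repeating this substitution $s$ times produces two telescoping sums indexed by $k = 1, \ldots, s$, in which the $k$-th term has the matrix argument $C+(2-k)I$ and the denominator $(C-kI)^{-1}(C-(k-1)I)^{-1}$, matching the stated formula.

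The main technical point to watch is the ordering of matrix factors, since the theorem places $AB$ (resp.\ $A$) on the left and $C^{-1}(C-I)^{-1}$ (resp.\ $B'C^{-1}(C-I)^{-1}$) on the right of each $F_{1}$; the hypotheses $AB = BA$ and $B'C = CB'$ are needed precisely to effect the two crossings described above, and the iteration step requires only that $B'$ commute with each shift $C-kI$, which follows from $B'C = CB'$. No further commutativity between $B$ and $B'$ or between $A$ and $B'$ is required because those factors stay separated by the $F_{1}$-block throughout the argument.
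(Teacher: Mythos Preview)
Your proof is correct and follows essentially the same approach as the paper: derive the one-step contiguous relation $F_{1}(A,B,B';C-I;x,y)=F_{1}(A,B,B';C;x,y)+xAB[\,\cdot\,]C^{-1}(C-I)^{-1}+yA[\,\cdot\,]B'C^{-1}(C-I)^{-1}$ from the Pochhammer identity $(C-I)^{-1}_{m+n}=(C)^{-1}_{m+n}[I+(m+n)(C-I)^{-1}]$, then iterate $s$ times. Your write-up is in fact more detailed than the paper's, which simply states the identity and the contiguous relation and then iterates; in particular, your explicit tracking of where the hypotheses $AB=BA$ and $B'C=CB'$ enter (to move $B$ left past $(A+I)_{m+n}$, and $B'$ right past the $C$-factors) fills in justification that the paper leaves implicit.
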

\begin{proof} Applying the definition of the Appell matrix function $F_{1}$ and the relation
\begin{align*}
{(C-I)^{-1}_{m+n}}={(C)^{-1}_{m+n}}\left[1+{m}{(C-I)^{-1}}+{n}{(C-I)^{-1}}\right],
\end{align*} \\
we obtain the following contiguous relation:
\begin{align}
&F_{1}(A, B, B'; C-I; x, y)\notag\\
&=F_{1}(A, B, B'; C; x, y)+ {x AB}\Big[\,F_{1}(A+I, B+I, B'; C+I; x, y)\Big]{C^{-1}(C-I)^{-1}}\notag\\
&\quad + y A\Big[\,F_{1}(A+I, B, B'+I; C+I; x, y)\Big]B'{C^{-1}(C-I)^{-1}}.\label{3eqp13}
\end{align}
Using this contiguous relation to the Appell matrix function $F_{1}$ with the matrix $C-sI$ for $s$ times, we get (\ref{3eq13}). 
\end{proof} 

Next, we will present recursion formulas for the Appell matrix functions $F_{2}$, $F_{3}$, $F_{4}$. We omit the proofs of the given below theorems.
\begin{theorem}
Let $A+sI$ be an invertible matrix for all  integers $s\geq0$ and let $AB=BA$;  $B'$, $C$ and $C'$ be commuting matrices. Then the following recursion formula holds true for the  Appell matrix function $F_{2}$:
\begin{align}
&F_{2}(A+sI, B, B'; C, C'; x, y)\notag\\
&=F_{2}(A, B, B'; C, C'; x, y)+ x{B}\Big[\sum_{k=1}^{s}F_{2}(A+kI, B+I, B'; C+I, C'; x, y)\Big]{C}^{-1}\notag\\
&\quad +y\Big[\sum_{k=1}^{s}F_{2}(A+kI, B, B'+I;C, C'+I; x, y)\Big]{B'}{C'}^{-1}.\label{3eq14}
\end{align}
If $A-kI$ is invertible for  integers $k\leq s$, then 
\begin{align}
&F_{2}(A-sI, B, B'; C, C'; x, y)\notag\\
&=F_{2}(A, B, B'; C, C'; x, y)- x{B}\Big[\sum_{k=0}^{s-1}F_{2}(A-kI, B+I, B'; C+I, C'; x, y)\Big]{C}^{-1}\notag\\
&\quad -y\Big[\sum_{k=0}^{s-1}F_{2}(A-kI, B, B'+I;C, C'+I; x, y)\Big]{B'}{C'}^{-1}.\label{3eq15}
\end{align}
\end{theorem}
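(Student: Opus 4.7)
The argument follows the same pattern used for $F_{1}$ in (\ref{3eqp1})--(\ref{3eqp2}): first establish a one-step contiguous relation for $F_{2}(A+I,B,B';C,C';x,y)$, then iterate $s$ times. I would begin by substituting the identity $(A+I)_{m+n}=A^{-1}(A)_{m+n}(A+(m+n)I)$ into the series definition (\ref{1eq3}) and splitting $A+(m+n)I=A+mI+nI$ to produce three partial sums. The $A$-summand collapses to $F_{2}(A,B,B';C,C';x,y)$ after cancelling $A^{-1}A$. The $m$- and $n$-summands absorb their numerical prefactors into the factorials, shift the summation indices, and then reassemble into $F_{2}$ series with parameters $(A+I,B+I,B';C+I,C')$ and $(A+I,B,B'+I;C,C'+I)$ respectively, using $(A)_{m+n+1}=A(A+I)_{m+n}$, $(B)_{m+1}=B(B+I)_{m}$, $(C)_{m+1}^{-1}=(C+I)_{m}^{-1}C^{-1}$, and their primed analogues.

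Once the resulting contiguous relation
\begin{align*}
F_{2}(A+I,B,B';C,C';x,y)
&=F_{2}(A,B,B';C,C';x,y) \\
&\quad + xB\,F_{2}(A+I,B+I,B';C+I,C';x,y)\,C^{-1} \\
&\quad + y\,F_{2}(A+I,B,B'+I;C,C'+I;x,y)\,B'(C')^{-1}
\end{align*}
is in hand, (\ref{3eq14}) follows by replacing $A$ with $A+I,\,A+2I,\dots,\,A+(s-1)I$ in succession and telescoping, exactly as in Theorem~\ref{rth12}; a short induction on $s$ works equally well. For (\ref{3eq15}) I would substitute $A\to A-I$ into the contiguous relation (permissible since $A-I$ is assumed invertible), rearrange so that the unshifted term sits on the left, and then iterate $s$ times; the two summation terms then carry a minus sign as required.

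The main obstacle is the non-commutative bookkeeping inside the partial sums. The factor $B$ has to be pushed to the far left of the emergent $F_{2}$ and $C^{-1}$ to its far right; this is legitimate because $AB=BA$ lets $B$ commute with the polynomial $(A+I)_{m+n}$, while $CC'=C'C$ lets $C^{-1}$ pass through $(C')_{n}^{-1}$. Similarly, $B'$ must be moved to the right of $(B'+I)_{n}(C)_{m}^{-1}(C'+I)_{n}^{-1}$ and $(C')^{-1}$ placed after it, which uses the fact that $B'$ commutes with each of $C$ and $C'$ (and trivially with $B'+I$). Crucially, no commutation between $A$ and any of $B'$, $C$, $C'$ is ever required, so the stated hypotheses $AB=BA$ and ``$B'$, $C$, $C'$ commuting'' are exactly what is used.
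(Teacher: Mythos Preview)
Your proposal is correct and follows precisely the route the paper takes (the proof is omitted there, but it is the direct analogue of the $F_{1}$ argument in (\ref{3eqp1})--(\ref{3eqp21})): derive the one-step contiguous relation from the Pochhammer identity $(A+I)_{m+n}=A^{-1}(A)_{m+n}(A+(m+n)I)$, then iterate in $A\mapsto A\pm I$. Your commutativity bookkeeping is accurate and matches exactly the hypotheses $AB=BA$ and ``$B'$, $C$, $C'$ commuting''.
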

\begin{theorem}
Let $A+sI$ be an invertible matrix for all  integers $s\geq0$ and let $AB=BA$; $B'$, $C$ and $C'$ be commuting matrices. Then the following recursion formula holds true for the  Appell matrix function $F_{2}$:
\begin{align}
&F_{2}(A+sI, B, B'; C, C'; x, y)\notag\\
&=\sum_{k_1+k_2\leq s}^{}{s\choose k_1, k_2}(B)_{k_1}\, x^{k_1} y^{k_2}\,
\Big[{F_{2}}(A+(k_1+k_2)I, B+k_1I, B'+k_2I; C+k_1I, C'+k_2 I; x, y)\Big]\notag\\
&\quad\times(B')_{k_2}{(C)^{-1}_{k_1} (C')^{-1}_{k_2}}.\notag\\
\label{3eq16}
\end{align}
If $A-kI$ is invertible for  integers  $k\leq s$, then
\begin{align}
&F_{2}(A-sI, B, B'; C, C'; x, y)\notag\\
&=\sum_{k_1+k_2\leq s}^{}{s\choose k_1, k_2}(B)_{k_1}\, (-x)^{k_1} (-y)^{k_2}\,\Big[{F_{2}}(A, B+k_1I, B'+k_2I; C+k_1I, C'+k_2 I; x, y)\Big]\notag\\
&\quad\times(B')_{k_2}{(C)^{-1}_{k_1}(C')^{-1}_{k_2}}.
\label{3eq17}
\end{align}
\end{theorem}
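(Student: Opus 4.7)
The proof parallels the induction used to establish the analogous binomial-type identity for $F_1$ in the earlier theorem, now carried out for $F_2$. The first step is to extract, from the definition of $F_{2}$ together with the identity $(A+I)_{m+n}=A^{-1}(A)_{m+n}(A+mI+nI)$, the single-step contiguous relation
\begin{align*}
&F_{2}(A+I, B, B'; C, C'; x, y)\\
&\quad=F_{2}(A, B, B'; C, C'; x, y)+ x\,{B}\,\bigl[F_{2}(A+I, B+I, B'; C+I, C'; x, y)\bigr]{C}^{-1}\\
&\qquad + y\,\bigl[F_{2}(A+I, B, B'+I; C, C'+I; x, y)\bigr]{B'}{C'}^{-1}.
\end{align*}
This is precisely the $s=1$ instance of (\ref{3eq16}), where only the three terms with $k_{1}+k_{2}\le 1$ survive.

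Next I would induct on $s$. Assuming (\ref{3eq16}) at level $s=t$, I would replace $A$ by $A+I$ throughout and apply the contiguous relation to each $F_{2}$ appearing on the right-hand side, so that each summand splits into three summands. Collecting the contributions indexed by a fixed pair $(k_{1},k_{2})$ with $k_{1}+k_{2}\le t+1$ produces a coefficient of the form $\binom{t}{k_{1},k_{2}}+\binom{t}{k_{1}-1,k_{2}}+\binom{t}{k_{1},k_{2}-1}$, which by the trinomial Pascal identity collapses to $\binom{t+1}{k_{1},k_{2}}$. Simultaneously the shifts $(B)_{k_{1}}(B+k_{1}I)=(B)_{k_{1}+1}$ and $(B')_{k_{2}}(B'+k_{2}I)=(B')_{k_{2}+1}$ absorb the extra $B$ and $B'$ factors arising from the contiguous relation, and similarly $(C+k_{1}I)^{-1}(C)^{-1}_{k_{1}}=(C)^{-1}_{k_{1}+1}$ and $(C'+k_{2}I)^{-1}(C')^{-1}_{k_{2}}=(C')^{-1}_{k_{2}+1}$ absorb the corresponding denominator factors. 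This yields (\ref{3eq16}) at level $s=t+1$, closing the induction.

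The chief obstacle is bookkeeping the non-commutativity. Since $A$ is assumed to commute with $B$ but not, a priori, with $B'$, $C$, or $C'$, and since $B$ is not assumed to commute with any of $B'$, $C$, $C'$, I must carefully keep all $B$-type factors on the left of the $F_{2}$ symbol and all $B'$-, $C$-, $C'$-type factors on the right, rearranging within each side only where the hypotheses permit. The right-hand factors may indeed be freely permuted among themselves since $B'$, $C$, $C'$ are mutually commuting; this is exactly what allows $(B')_{k_{2}}$, $(C)^{-1}_{k_{1}}$, and $(C')^{-1}_{k_{2}}$ to appear in any convenient order in the final formula. The proof of (\ref{3eq17}) then follows the same scheme starting from the $s=1$ minus contiguous relation (obtained by setting $s=1$ in (\ref{3eq15}), which in turn comes from replacing $A$ by $A-I$ in the contiguous relation above), using the hypothesis that $A-kI$ is invertible for $k\le s$ to iterate in the reverse direction.
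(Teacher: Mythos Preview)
Your proposal is correct and follows exactly the approach the paper uses for the analogous $F_{1}$ identity (Theorem for (\ref{3eq3})--(\ref{3eq4})): establish the single-step contiguous relation, induct on $s$ by replacing $A\to A+I$ in the induction hypothesis and applying the contiguous relation to each term, then collect using the trinomial Pascal identity while absorbing the extra $B$, $B'$, $C$, $C'$ factors into the shifted Pochhammer symbols. The paper in fact omits the proof for $F_{2}$ precisely because it is a routine adaptation of the $F_{1}$ argument, and your handling of the left/right placement of the non-commuting factors is the only additional care needed.
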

\begin{theorem}
Let $B+sI$ and $B'+sI$ be invertible matrices for all  integers  $s\geq0$ and let  $CC'=C'C$ . Then following recursion formulas hold true for the  Appell matrix function $F_{2}$:
\begin{align}
&F_{2}(A, B+sI, B'; C, C'; x, y)\notag\\
&=F_{2}(A, B, B'; C, C'; x, y)+ x{A}\Big[\sum_{k=1}^{s}F_{2}(A+I, B+kI, B'; C+I, C'; x, y)\Big]{C}^{-1};\label{3eq18}\\
&F_{2}(A, B, B'+sI; C, C'; x, y)\notag\\
&=F_{2}(A, B, B'; C, C'; x, y)+ y{A}\Big[\sum_{k=1}^{s}F_{2}(A+I, B, B'+kI; C, C'+I; x, y)\Big]{C'}^{-1}.\label{3eq20}
\end{align}
Furthermore, if $B-kI$ and $B'-kI$ are invertible for  integers $k\leq s$, then 
\begin{align}
&F_{2}(A, B-sI, B'; C, C'; x, y)\notag\\
&=F_{2}(A, B, B'; C, C'; x, y)- x{A}\Big[\sum_{k=0}^{s-1}F_{2}(A+I, B-kI, B'; C+I, C'; x, y)\Big]{C}^{-1};\label{3eq19}\\
&F_{2}(A, B, B'-sI; C, C'; x, y)\notag\\
&=F_{2}(A, B, B'; C, C'; x, y)- y{A}\Big[\sum_{k=0}^{s-1}F_{2}(A+I, B, B'-kI; C, C'+I; x, y)\Big]{C'}^{-1}.\label{3eq21}
\end{align}
\end{theorem}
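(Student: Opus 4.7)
The plan is to mimic the contiguous-relation-then-iterate strategy already established in this paper for the Gauss function (\ref{2eqp2}) and for $F_{1}$ in (\ref{3eqp1}) and (\ref{3eqp13}). The algebraic starting point is the identity
\[
(B+I)_m \;=\; B^{-1}(B)_m(B+mI) \;=\; (B)_m + m\,(B)_m\,B^{-1},
\]
which is valid because $(B)_m$ is a polynomial in $B$ and $B$ is invertible (the $s=0$ instance of the hypothesis); the analogous identity for $(B'+I)_n$ will drive the $B'$-recursions.

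First I would substitute this identity into the series (\ref{1eq3}) defining $F_{2}(A, B+I, B'; C, C'; x, y)$. The $(B)_m$ summand reproduces $F_{2}(A, B, B'; C, C'; x, y)$. The remaining summand carries an extra factor $m$ that cancels after the reindex $m\mapsto m+1$; then the relations
\[
(A)_{m+n+1} = A\,(A+I)_{m+n},\qquad (B)_{m+1}B^{-1} = (B+I)_m,\qquad (C)_{m+1}^{-1} = (C+I)_m^{-1}\,C^{-1}
\]
peel an $A$ off to the left, collapse the spurious $B^{-1}$ back into a plain $(B+I)_m$ in the $B$-slot, and push a $C^{-1}$ out to the right of $(C+I)_m^{-1}$. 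Finally, moving this $C^{-1}$ past $(C')_n^{-1}$ to the far right uses, and in fact requires, exactly the hypothesis $CC'=C'C$. The outcome is the one-step contiguous relation
\begin{align*}
F_{2}(A, B+I, B'; C, C'; x, y) &= F_{2}(A, B, B'; C, C'; x, y) \\
&\quad + xA\bigl[F_{2}(A+I, B+I, B'; C+I, C'; x, y)\bigr]C^{-1}.
\end{align*}
A completely parallel computation starting from $(B'+I)_n$ yields the corresponding one-step relation for $F_{2}(A, B, B'+I;\,\cdot\,)$; this time the emerging $C'^{-1}$ already sits at the right end of the product, so the commutativity $CC'=C'C$ is not needed for that particular step.

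Next I iterate. For (\ref{3eq18}), replace $B$ by $B+I$ in the contiguous relation and substitute back; doing this $s$ times (a clean formulation is a short induction on $s$ in the spirit of (\ref{3eqp2})) telescopes the right-hand side into the finite sum $xA\bigl[\sum_{k=1}^{s}F_{2}(A+I, B+kI, B'; C+I, C'; x, y)\bigr]C^{-1}$. For the descending formula (\ref{3eq19}), I replace $B$ by $B-I$ in the one-step relation, which is legitimate because $B-kI$ is invertible for $k\le s$, and iterate. Formulas (\ref{3eq20}) and (\ref{3eq21}) follow by applying the same two moves to the $B'$-shift contiguous relation.

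The main obstacle is purely bookkeeping: the ordering $A$–$(B)_m$–$(B')_n$–$(C)_m^{-1}$–$(C')_n^{-1}$ in (\ref{1eq3}) must be preserved after every manipulation, since the matrices are not assumed to commute beyond the stated hypothesis. The two places where care is needed are (i) confirming that the $B^{-1}$ produced by the identity for $(B+I)_m$ is exactly cancelled by the $B$ coming from $(B)_{m+1}$, so that nothing is left dangling outside the summand, and (ii) confirming that the $C^{-1}$ extracted from $(C)_{m+1}^{-1}$ really can be moved to the far right, which is precisely what $CC'=C'C$ provides. Once these two bookkeeping points are checked, the iteration step is routine.
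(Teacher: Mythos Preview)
Your proposal is correct and follows exactly the template the paper uses throughout: derive a single-step contiguous relation from the Pochhammer identity $(B+I)_m=(B)_m+m(B)_mB^{-1}$ (the analogue of (\ref{2eqp1}) and of the relation preceding (\ref{3eqp1})), then iterate as in the proofs of Theorem~\ref{rth12} and of (\ref{3eq1})--(\ref{3eq2}). The paper in fact omits the proof of this theorem, but your argument is precisely the one implicitly intended, including the observation that the hypothesis $CC'=C'C$ is what allows the extracted $C^{-1}$ to pass $(C')_n^{-1}$ in the $B$-shift case.
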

\begin{theorem}
Let $B+sI$ and $B'+sI$ be invertible matrices for all  integers $s\geq0$ and let  $CC'=C'C$ . Then following recursion formulas hold true for the  Appell matrix function $F_{2}$:
\begin{align}
&F_{2}(A, B+sI, B'; C, C'; x, y)\notag\\
&=\sum_{k_1=0}^{s}{s\choose k_1}{(A)_{k_1}}\, x^{k_1} \Big[ {F_{2}}(A+k_1I, B+k_1I, B'; C+k_1I, C'; x, y)\Big]{(C)^{-1}_{k_1}};
\label{3eq22}\\
&F_{2}(A, B, B'+sI; C, C'; x, y)\notag\\
&=\sum_{k_1=0}^{s}{s\choose k_1}{(A)_{k_1}}\, y^{k_1} \Big[ {F_{2}}(A+k_1I, B, B'+k_1 I;C,  C'+k_1I; x, y)\Big]{(C')^{-1}_{k_1}}.
\label{3eq24}
\end{align}
Furthermore, if $B-kI$ and $B'-kI$ are invertible for  integers $k\leq s$, then
\begin{align}
&F_{2}(A, B-sI, B'; C, C'; x, y)\notag\\
&=\sum_{k_1=0}^{s}{s\choose k_1}{(A)_{k_1}}\, (-x)^{k_1} \Big[ {F_{2}}(A+k_1I, B, B'; C+k_1I, C'; x, y)\Big]{(C)^{-1}_{k_1}};
\label{3eq23}\\
&F_{2}(A, B, B'-sI; C, C'; x, y)\notag\\
&=\sum_{k_1=0}^{s}{s\choose k_1}{(A)_{k_1}}\, (-y)^{k_1} \Big[ {F_{2}}(A+k_1I, B, B'; C, C'+k_1I; x, y)\Big]{(C')^{-1}_{k_1}}.
\label{3eq25}
\end{align}
\end{theorem}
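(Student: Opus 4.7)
The plan is to prove (\ref{3eq22})--(\ref{3eq25}) by induction on $s$, treating (\ref{3eq22}) in detail and noting that (\ref{3eq24}) follows from (\ref{3eq22}) by swapping the roles of $(x,B,C)$ and $(y,B',C')$, while (\ref{3eq23}) and (\ref{3eq25}) follow from the same scheme with the decreasing variant of the base contiguous relation.

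For the base case $s=1$, identity (\ref{3eq22}) collapses to
\[F_2(A,B+I,B';C,C';x,y)=F_2(A,B,B';C,C';x,y)+x\,A\,F_2(A+I,B+I,B';C+I,C';x,y)\,C^{-1},\]
which I would derive directly from the series (\ref{1eq3}) via $(B+I)_m=B^{-1}(B)_m(B+mI)$; this is legitimate since $B=B+0\cdot I$ is invertible by hypothesis. The mechanics mirror those of (\ref{3eqp1}) in the $F_1$ case, and the identity is the $s=1$ specialisation of (\ref{3eq18}) already established earlier.

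For the inductive step, assuming (\ref{3eq22}) at $s=t$, I would replace $B$ by $B+I$ throughout the hypothesis so that the left-hand side reads $F_2(A,B+(t+1)I,B';C,C';x,y)$. Each summand on the right then involves $F_2(A+k_1 I,B+(k_1+1)I,B';C+k_1 I,C';x,y)$; applying the base contiguous relation with $A$, $B$, $C$ replaced by $A+k_1 I$, $B+k_1 I$, $C+k_1 I$ expands each summand into two pieces. After reindexing $k_1\mapsto k_1-1$ in the second piece and invoking Pascal's identity $\binom{t}{k_1}+\binom{t}{k_1-1}=\binom{t+1}{k_1}$, the double sum collapses into the required form of (\ref{3eq22}) at $s=t+1$.

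The step I expect to be most delicate is the noncommutative bookkeeping: $(A)_{k_1}$ and $x^{k_1}$ sit to the left, $(C)^{-1}_{k_1}$ sits to the right of the matrix-valued $F_2$, and in general these factors commute neither with each other nor with $F_2$, so each factor must be kept in its slot throughout the manipulation. The critical verification is that applying the base relation to the shifted argument produces a left factor $(A+k_1 I)$ that merges with $(A)_{k_1}$ into $(A)_{k_1+1}$, and a right factor $(C+k_1 I)^{-1}$ that merges with $(C)^{-1}_{k_1}$ into $(C)^{-1}_{k_1+1}$; these mergers use only that $A$ commutes with its own shifts (automatic) and the invertibility of every $C+jI$ built into the series definition. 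The hypothesis $CC'=C'C$ is needed only when transferring the argument to (\ref{3eq24}), since there the analogous factor $(C')^{-1}_{k_1}$ must pass cleanly through the $C$-block of terms. Once this positional accounting is in place, the combinatorial collapse via Pascal's identity is routine.
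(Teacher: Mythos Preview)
Your induction-and-Pascal argument is correct and mirrors exactly the scheme the paper uses in the analogous proofs of Theorem~\ref{rth21} and of (\ref{3eq3}); the paper in fact omits the proof of the present theorem, referring implicitly to that same pattern. One small correction to your side remark: the hypothesis $CC'=C'C$ is already required to derive the $s=1$ base relation for (\ref{3eq22})---one must slide the extracted $C^{-1}$ to the right past the $(C')_n^{-1}$ factor in the $F_2$ series---so it is not used solely in the passage to (\ref{3eq24}).
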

\begin{theorem}
Let $C-sI$ and $C'-sI$ be invertible matrices for all  integers $s\geq0$ and let $AB=BA$; $B'$, $C$ and $C'$ be commuting matrices. Then following recursion formulas hold true for the  Appell matrix function $F_{2}$:
\begin{align}
&F_{2}(A, B, B'; C-sI, C'; x, y)\notag\\
&=F_{2}(A, B, B'; C, C'; x, y)+ x AB\Big[\sum_{k=1}^{s} F_{2}(A+I, B+I, B'; C+(2-k)I, C'; x, y)\notag\\&\quad\times{(C-kI)^{-1}(C-(k-1)I)^{-1}}\Big]\,
;\label{3eq26}
\end{align}
\begin{align}
&F_{2}(A, B, B'; C, C'-sI; x, y)\notag\\
&=F_{2}(A, B, B'; C, C'; x, y)+ y A\Big[\sum_{k=1}^{s} F_{2}(A+I, B, B'+I;\,C, C'+(2-k)I; x, y)\,\notag\\
&\quad\times{(C'-kI)^{-1}(C'-(k-1)I)^{-1}}\Big]B'
.\label{3eq27}
\end{align}
\end{theorem}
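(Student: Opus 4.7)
The plan is to follow the template of the proof of (\ref{3eq13}): first derive a one-step contiguous relation that shifts $C$ (resp.\ $C'$) by $-I$, then iterate it $s$ times.

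For the $C$-recursion, I would start from the series definition (\ref{1eq3}) of $F_{2}$ and apply the single-index identity
\begin{align*}
(C-I)_m^{-1} = (C)_m^{-1}\bigl[I + m(C-I)^{-1}\bigr],
\end{align*}
which is valid because only the index $m$ enters the $C$-slot of $F_{2}$ (unlike the $C$-slot of $F_{1}$, where $m+n$ appears). The $I$ piece of the bracket reproduces $F_{2}(A,B,B';C,C';x,y)$, while the $m(C-I)^{-1}$ piece survives only for $m\geq 1$. Performing the shift $m\to m+1$ and using $(A)_{m+n+1}=A(A+I)_{m+n}$, $(B)_{m+1}=B(B+I)_m$, $(C)_{m+1}^{-1}=(C+I)_m^{-1}C^{-1}$, then invoking $AB=BA$ to move $B$ past $(A+I)_{m+n}$ and the commutativity of $C$ with $C'$ to push $C^{-1}(C-I)^{-1}$ past the $n$-indexed factors, one arrives at the one-step contiguous relation
\begin{align*}
F_{2}(A,B,B';C-I,C';x,y) &= F_{2}(A,B,B';C,C';x,y) \\
&\quad + xAB\,F_{2}(A+I,B+I,B';C+I,C';x,y)\,C^{-1}(C-I)^{-1}.
\end{align*}

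To iterate, I would replace $C$ successively by $C-I, C-2I,\dots, C-(s-1)I$; the $k$-th application contributes the summand $xAB\,F_{2}(A+I,B+I,B';C+(2-k)I,C';x,y)\,(C-kI)^{-1}(C-(k-1)I)^{-1}$, and a routine induction on $s$ assembles these terms into (\ref{3eq26}). For the second formula (\ref{3eq27}) the argument is completely parallel, using instead $(C'-I)_n^{-1}=(C')_n^{-1}\bigl[I+n(C'-I)^{-1}\bigr]$ and the shift $n\to n+1$. The factor $B'$ ends up on the right of (\ref{3eq27}) because, after writing $(B')_{n+1}=B'(B'+I)_n$, the commutativity of $B'$ with $C$ and $C'$ allows $B'$ to be transported through the trailing inverse factors of $C'$ to the far right of the expression, whereas $A$ (which does not commute with $B'$ in general) must be kept on the left.

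The main obstacle is purely combinatorial bookkeeping: each manipulation invokes exactly one of the stated commutativity hypotheses ($AB=BA$; pairwise commutativity of $B'$, $C$, $C'$) to justify sending a factor to its prescribed side, and one must track carefully how the two-factor product $(C-kI)^{-1}(C-(k-1)I)^{-1}$ telescopes across successive iterations. No new analytic difficulty arises beyond what was already present in the proof of (\ref{3eq13}).
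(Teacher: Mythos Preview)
Your proposal is correct and follows precisely the approach the paper intends: the paper omits the proof of this theorem, but the argument is meant to mirror the proof of (\ref{3eq13}) for $F_{1}$, namely deriving the one-step contiguous relation from the identity $(C-I)_m^{-1}=(C)_m^{-1}\bigl[I+m(C-I)^{-1}\bigr]$ (respectively its $C'$-analogue) and then iterating $s$ times. Your bookkeeping of the commutativity hypotheses---$AB=BA$ to pull $B$ left through $(A+I)_{m+n}$, and $CC'=C'C$ (resp.\ $B'C=CB'$, $B'C'=C'B'$) to push the inverse factors to the right---is exactly what is needed; the only cosmetic remark is that the passage from step $k-1$ to step $k$ is a straightforward substitution rather than a genuine telescoping.
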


\begin{theorem}\label{rth1}
Let $A+sI$ and $A'+sI$ be invertible matrices for all  integers $s\geq0$ and let $A$, $A'$  and $B$ be commuting matrices;  $B'C=CB'$. Then following recursion formulas hold true for the  Appell matrix function $F_{3}$:
\begin{align}
&F_{3}(A+sI, A', B, B'; C; x, y)\notag\\
&=F_{3}(A, A', B, B'; C; x, y)+ x{B}\Big[\sum_{k=1}^{s}F_{3}(A+kI, A', B+I, B'; C+I; x, y)\Big]{C}^{-1};\label{3eq30}\\
&F_{3}(A, A'+sI, B, B'; C; x, y)\notag\\
&=F_{3}(A, A', B, B'; C; x, y)+ y\Big[\sum_{k=1}^{s}F_{3}(A, A'+kI, B, B'+I; C+I; x, y)\Big]{B'}{C}^{-1}.\label{3eq32}
\end{align}
Furthermore, if $A-kI$ and $A'-kI$ are invertible for  integers $k\leq s$, then 
\begin{align}
&F_{3}(A-sI, A', B, B'; C; x, y)\notag\\
&=F_{3}(A, A', B, B'; C; x, y)-x{B}\Big[\sum_{k=0}^{s-1}F_{3}(A-kI, A', B+I, B'; C+I; x, y)\Big]{C}^{-1};\label{3eq31}\\
&F_{3}(A, A'-sI, B, B'; C; x, y)\notag\\
&=F_{3}(A, A', B, B'; C; x, y)-y\Big[\sum_{k=0}^{s-1}F_{3}(A, A'-kI, B, B'+I; C+I; x, y)\Big]{B'}{C}^{-1};\label{3eq33}
\end{align}
\end{theorem}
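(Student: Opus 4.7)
The plan is to mirror the approach used earlier for $F_1$ and $F_2$: first derive two one-step contiguous relations (one for $A+I$ and one for $A'+I$) directly from the series definition (\ref{1eq4}) of $F_3$, then iterate each relation $s$ times to obtain (\ref{3eq30}) and (\ref{3eq32}). The negative-shift formulas (\ref{3eq31}) and (\ref{3eq33}) will then be obtained by substituting $A\mapsto A-I$ (respectively $A'\mapsto A'-I$) in the same two contiguous relations and iterating.

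To derive the first contiguous relation I would substitute $(A+I)_m = A^{-1}(A)_m(A+mI)$ into the series for $F_3(A+I, A', B, B'; C; x, y)$ and split the sum according to $A+mI = A + mI$. The $A$-piece collapses to $F_3(A, A', B, B'; C; x, y)$ since $A$ commutes with $(A)_m$. For the $mI$-piece, using $A^{-1}(A)_m = (A+I)_{m-1}$ and $(B)_m = B(B+I)_{m-1}$ for $m\ge 1$ and then shifting $m\mapsto m+1$, one extracts a factor $xB$ on the left and $C^{-1}$ on the right (from $(C)_{m+n+1}^{-1} = (C+I)_{m+n}^{-1}C^{-1}$). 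The commutativity of $A$, $A'$, $B$ then allows $(A+I)_m$, $(A')_n$ and $(B+I)_m$ to be freely reordered inside the sum, producing
\begin{align*}
F_3(A+I, A', B, B'; C; x, y) &= F_3(A, A', B, B'; C; x, y) \\
&\quad + xB\,\bigl[F_3(A+I, A', B+I, B'; C+I; x, y)\bigr]\,C^{-1}.
\end{align*}
The companion relation is derived analogously from $(A'+I)_n = (A')^{-1}(A')_n(A'+nI)$; here the hypothesis $B'C = CB'$ is used to commute $B'$ past $(C+I)_{m+n}^{-1}C^{-1}$ so that the factor $B'C^{-1}$ can be placed on the right, yielding
\begin{align*}
F_3(A, A'+I, B, B'; C; x, y) &= F_3(A, A', B, B'; C; x, y) \\
&\quad + y\,\bigl[F_3(A, A'+I, B, B'+I; C+I; x, y)\bigr]\,B'C^{-1}.
\end{align*}

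Iterating each contiguous relation $s$ times by substituting $A\mapsto A+I$ (respectively $A'\mapsto A'+I$) repeatedly, exactly as in the derivation of (\ref{3eqp2}) for $F_1$, telescopes into the sums $\sum_{k=1}^{s}$ appearing in (\ref{3eq30}) and (\ref{3eq32}); the invertibility of $A+kI$ and $A'+kI$ for $0\le k\le s-1$ is what justifies each step. For (\ref{3eq31}) and (\ref{3eq33}) I would instead substitute $A\mapsto A-I$ (respectively $A'\mapsto A'-I$) in the contiguous relations, use the assumed invertibility of $A-kI$ (respectively $A'-kI$) for $k\le s$ to solve for the shifted $F_3$, and iterate in the same manner.

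The main difficulty is simply bookkeeping: at every reordering one must check that the stated commutativity hypotheses \emph{suffice}, and in particular that the theorem does not assume $BC = CB$. This is what forces the asymmetric placement of the factors in the final formulas: in the first relation $xB$ sits on the left of the bracket rather than being commuted through to the right, whereas in the second relation the factor $B'$ must be moved past the central Pochhammer using $B'C = CB'$ before being placed on the right. Once the two base contiguous relations are established, the inductive $s$-fold iteration is entirely routine and parallel to the proofs already spelled out for $F_1$, which is presumably why the authors omit the details.
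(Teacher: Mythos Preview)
Your proposal is correct and follows exactly the template the paper uses for the analogous results (the explicit proofs of Theorems~\ref{rth12} and the $F_{1}$ case): derive a one-step contiguous relation from the Pochhammer identity $(A+I)_m=A^{-1}(A)_m(A+mI)$ applied to the series (\ref{1eq4}), then iterate/telescope. Since the paper omits the proof of this theorem with the remark that it is parallel to the earlier cases, your write-up is precisely the intended argument, including the careful observation about why $xB$ must sit on the left (no assumption $BC=CB$) while $B'$ can be commuted to the right via $B'C=CB'$.
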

\begin{theorem}\label{rth2}
Let $A+sI$ and $A'+sI$ be invertible matrices for all  integers $s\geq0$ and  let $A$, $A'$  and $B$ be commuting matrices;  $B'C=CB'$.  Then following recursion formulas hold true for the  Appell matrix function $F_{3}$:
\begin{align}
&F_{3}(A+sI,A', B, B'; C; x, y)\notag\\
&=\sum_{k_1=0}^{s}{s\choose k_1}{(B)_{k_1}}\, x^{k_1} \Big[{F_{3}}(A+k_1 I, A', B+k_1I, B'; C+k_1 I; x, y)\Big]{(C)^{-1}_{k_1}};
\label{3eq34}\\
&F_{3}(A,A'+sI, B, B'; C; x, y)\notag\\
&=\sum_{k_1=0}^{s}{s\choose k_1}\, y^{k_1} \Big[{F_{3}}(A, A'+k_1 I, B, B'+k_1I; C+k_1 I; x, y)\Big]{(B')_{k_1}}{(C)^{-1}_{k_1}}.
\label{3eq36}
\end{align}
Furthermore, if $A-kI$ and $A'-kI$ are invertible for integers  $k\leq s$, then 
\begin{align}
&F_{3}(A-sI, A', B, B'; C; x, y)\notag\\
&=\sum_{k_1=0}^{s}{s\choose k_1}{(B)_{k_1}}\, (-x)^{k_1} \Big[{F_{3}}(A, A', B+k_1I, B'; C+k_1 I; x, y)\Big]{(C)^{-1}_{k_1}};
\label{3eq35}\\
&F_{3}(A, A'-sI, B, B'; C; x, y)\notag\\
&=\sum_{k_1=0}^{s}{s\choose k_1}\, (-y)^{k_1} \Big[ {F_{3}}(A, A', B, B'+k_1I; C+k_1 I; x, y)\Big]{(B')_{k_1}}{(C)^{-1}_{k_1}}.
\label{3eq37}
\end{align}
\end{theorem}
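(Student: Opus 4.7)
The plan is to prove all four recursion formulas by induction on $s$, following the same template used for Theorem~\ref{rth21} in the $_2F_1$ case. The starting point is a set of single-step contiguous relations for $F_3$ corresponding to $A\mapsto A\pm I$ and $A'\mapsto A'\pm I$; these are exactly the $s=1$ specializations of (\ref{3eq34})--(\ref{3eq37}). They follow directly from the series definition (\ref{1eq4}) together with the shifted-factorial identity $(X+I)_k=X^{-1}(X)_k(X+kI)$ applied with $X=A$ or $X=A'$. Because $(A)_m(A')_n$ factors cleanly in $F_3$, a shift in $A$ affects only the $m$-summation and yields
\begin{align*}
F_{3}(A+I, A', B, B'; C; x, y)
&=F_{3}(A, A', B, B'; C; x, y)\\
&\quad+ x B\,F_{3}(A+I, A', B+I, B'; C+I; x, y)\,C^{-1},
\end{align*}
where the commutation hypotheses $AB=BA$ and $A'B=BA'$ are used to move the emerging $B$ factor past $(A+I)_m$ and $(A')_n$ to the leftmost position. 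The analogous relation for $A'\mapsto A'+I$ uses $B'C=CB'$ to commute $B'$ with $(C+I)^{-1}_{m+n}$ so that it lands on the right of the new $F_3$ factor. The $A\mapsto A-I$ and $A'\mapsto A'-I$ versions are obtained by substituting $A\mapsto A-I$ or $A'\mapsto A'-I$ in these identities, using the assumed invertibility.

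For (\ref{3eq34}) the base case $s=1$ is the first contiguous relation above. Assuming (\ref{3eq34}) for $s=t$ and replacing $A$ by $A+I$ produces a sum whose $k_1$-th summand contains $F_{3}(A+(k_1+1)I, A', B+k_1I, B'; C+k_1I; x, y)$; applying the contiguous relation with $A$, $B$, $C$ each shifted by $k_1I$ splits this into two sums. In the second sum I invoke $(B)_{k_1}(B+k_1I)=(B)_{k_1+1}$ and $(C+k_1I)^{-1}(C)^{-1}_{k_1}=(C)^{-1}_{k_1+1}$, which are legitimate because each identity involves only polynomials in $B$ alone or $C$ alone and thus needs no external commutativity. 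Reindexing $k_1\mapsto k_1-1$ followed by Pascal's identity $\binom{t}{k_1}+\binom{t}{k_1-1}=\binom{t+1}{k_1}$ consolidates the two sums into the $s=t+1$ form, closing the induction.

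Formulas (\ref{3eq35})--(\ref{3eq37}) are handled by the same argument with obvious modifications: (\ref{3eq35}) uses the $A\mapsto A-I$ relation and carries a factor $(-x)^{k_1}$; in (\ref{3eq36}) and (\ref{3eq37}) the role of $A$ is played by $A'$ and that of $B$ by $B'$, and the shifted $(B')_{k_1}$ sits on the right of the $F_3$ factor rather than on the left. At the corresponding rearrangement step for (\ref{3eq36}) one needs $(B'+k_1I)(C+k_1I)^{-1}(B')_{k_1}(C)^{-1}_{k_1}=(B')_{k_1+1}(C)^{-1}_{k_1+1}$; this is where the hypothesis $B'C=CB'$ is essential, since it allows $(C+k_1I)^{-1}$ to commute past $(B')_{k_1}$ before the two pairs are collapsed. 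The main obstacle in the whole theorem is purely bookkeeping: because $B$ is not assumed to commute with $B'$ or $C$, every rearrangement must be justified by one of the granted commutativities ($AA'=A'A$, $AB=BA$, $A'B=BA'$, or $B'C=CB'$), and the side on which each matrix factor appears relative to $F_3$ must match the target identity step for step. Once the ordering is respected, the combinatorial core of the argument is the same Pascal induction that drove the proof of Theorem~\ref{rth21}.
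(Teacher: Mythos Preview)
Your proposal is correct and follows exactly the approach the paper uses throughout: the paper omits the proof of this theorem, but its pattern (explicitly carried out for Theorems~\ref{rth21} and the $F_1$ analogue in Theorem~4.2) is precisely the induction-on-$s$ from the single-step contiguous relations, combined with Pascal's identity, that you describe. Your attention to the placement of the matrix factors and to which commutation hypotheses justify each move is appropriate and matches the paper's level of care.
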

\bigskip
The recursion formulas for $F_{3}(A, A', B\pm sI, B'; C; x, y)$ and $F_{3}(A, A', B,  B'\pm sI; C; x, y)$ are obtained by replacing $A \leftrightarrow B$ and $A' \leftrightarrow B'$ in Theorem~\ref{rth1} and Theorem~\ref{rth2}, respectively.

\begin{theorem}
Let $C-sI$ be an invertible matrix for all  integers $s\geq0$ and  let $A$, $A'$  and $B$ be commuting matrices;  $B'C=CB'$. Then the following recursion formula holds true for the  Appell matrix function $F_{3}$:
\begin{align}
&F_{3}(A,A', B, B'; C-sI; x, y)\notag\\
&=F_{3}(A,A', B, B'; C; x, y)+ x AB\Big[\sum_{k=1}^{s}\, F_{3}(A+I, A', B+I, B'; C+(2-k)I; x, y)\Big]\notag\\&\quad\times{(C-kI)^{-1}(C-(k-1)I)^{-1}}\notag\\
&\quad + yA' \Big[\sum_{k=1}^{s}\,F_{3}(A, A'+I, B, B'+I; C+(2-k)I; x, y){(C-kI)^{-1}(C-(k-1)I)^{-1}}\Big] B'.\label{3eq38}
\end{align}
\end{theorem}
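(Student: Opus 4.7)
The plan is to follow the same two-stage strategy used in the proofs of (\ref{2eq9}) and (\ref{3eq13}): first derive a one-step contiguous relation that expresses $F_{3}(A, A', B, B'; C-I; x, y)$ in terms of $F_{3}$ evaluated at $C$ and at $C+I$, and then iterate this relation $s$ times with $C$ replaced by $C-(k-1)I$ at the $k$-th step for $k=1,2,\ldots,s$.

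For the one-step relation I would start from the series definition (\ref{1eq4}) and insert the identity
\[
 (C-I)^{-1}_{m+n}=(C)^{-1}_{m+n}\bigl[I+m(C-I)^{-1}+n(C-I)^{-1}\bigr],
\]
which is the direct analogue of the identity used in the $F_{1}$ case. This splits the series for $F_{3}(A, A', B, B'; C-I; x, y)$ into three pieces: the original $F_{3}(A, A', B, B'; C; x, y)$, an ``$m$-piece'' carrying the factor $m(C-I)^{-1}$, and an ``$n$-piece'' carrying the factor $n(C-I)^{-1}$. In the $m$-piece I would reindex $m\mapsto m+1$ and use $(A)_{m+1}=A(A+I)_{m}$, $(B)_{m+1}=B(B+I)_{m}$, and $(C)^{-1}_{m+n+1}=(C+I)^{-1}_{m+n}C^{-1}$, which converts that sum into a copy of $F_{3}(A+I, A', B+I, B'; C+I; x, y)$ flanked by matrix prefactors; the $n$-piece is treated symmetrically using $(A')_{n+1}=A'(A'+I)_{n}$ and $(B')_{n+1}=B'(B'+I)_{n}$.

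The main delicate step is ordering the non-commuting factors so that the $m$-piece produces $xAB$ on the left and a $C^{-1}(C-I)^{-1}$ block on the right, while the $n$-piece produces $yA'$ on the left and a $(C-I)^{-1}C^{-1}B'$ block on the right, matching the shape of (\ref{3eq38}) when $s=1$. The hypothesis that $A$, $A'$, $B$ commute pairwise is exactly what is needed to slide $A$ and $B$ past the factorials $(A+I)_{m}$, $(A')_{n}$, $(B+I)_{m}$ in the $m$-piece, and to slide $A'$ past $(A)_{m}$, $(A'+I)_{n}$, $(B)_{m}$ in the $n$-piece; the hypothesis $B'C=CB'$ is then precisely what permits $B'$ to be carried past the remaining $C$-functions $(C+I)^{-1}_{m+n}C^{-1}(C-I)^{-1}$ so that it lands on the far right. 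The outcome is the contiguous relation
\begin{align*}
 F_{3}(A, A', B, B'; C-I; x, y)
 &= F_{3}(A, A', B, B'; C; x, y)\\
 &\quad + xAB\, F_{3}(A+I, A', B+I, B'; C+I; x, y)\, C^{-1}(C-I)^{-1}\\
 &\quad + yA'\, F_{3}(A, A'+I, B, B'+I; C+I; x, y)\, (C-I)^{-1}C^{-1}\,B'.
\end{align*}

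The iteration is then routine. Applying this contiguous relation with $C$ replaced by $C-(k-1)I$ for $k=1,\ldots,s$ and telescoping the successive differences $F_{3}(\cdot; C-(k-1)I; \cdot)-F_{3}(\cdot; C-kI; \cdot)$ yields, after relabelling, exactly the two sums $\sum_{k=1}^{s}$ appearing in (\ref{3eq38}), with arguments $C+(2-k)I$ and trailing factors $(C-kI)^{-1}(C-(k-1)I)^{-1}$. The commutativity hypotheses are preserved under the shift $C\mapsto C-(k-1)I$, so no fresh ordering obstructions appear at the iteration stage.
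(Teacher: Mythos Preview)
Your proposal is correct and follows essentially the same approach as the paper: the paper omits the explicit proof of this $F_{3}$ result, but the method you outline---deriving the one-step contiguous relation from the identity $(C-I)^{-1}_{m+n}=(C)^{-1}_{m+n}\bigl[I+m(C-I)^{-1}+n(C-I)^{-1}\bigr]$ and then iterating $s$ times---is exactly the argument the paper gives for the parallel $F_{1}$ formula (\ref{3eq13}). Your additional discussion of how the commutativity hypotheses are used to place the matrix factors in the correct order is more detailed than anything the paper spells out, but is consistent with its conventions.
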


\begin{theorem}\label{rth3}
Let $A+sI$ be an invertible matrix for all  integers  $s\geq0$ and let $AB=BA$; $CC'=C'C$. Then the following recursion formula holds true for the  Appell matrix function $F_{4}$:
\begin{align}
&F_{4}(A+sI, B; C, C'; x, y)\notag\\
&=F_{4}(A, B; C, C'; x, y)+ x{B}\Big[\sum_{k=1}^{s}F_{4}(A+kI, B+I; C+I, C'; x, y)\Big]{C}^{-1}\notag\\
&\quad +y{B}\Big[\sum_{k=1}^{s}F_{4}(A+kI, B+I;C,  C'+I; x, y)\Big]{C'}^{-1}.\label{3eq39}\end{align}
Furthermore, if $A-kI$ is invertible for $k\leq s$, then
\begin{align}
&F_{4}(A-sI, B; C, C'; x, y)\notag\\
&=F_{4}(A, B; C, C'; x, y)- x{B}\Big[\sum_{k=0}^{s-1}F_{4}(A-kI, B+I; C+I, C'; x, y)\Big]{C}^{-1}\notag\\
&\quad -y{B}\Big[\sum_{k=0}^{s-1}F_{1}(A-kI, B+I; C,  C'+I; x, y)\Big]{C'}^{-1}.\label{3eq40}
\end{align}
\end{theorem}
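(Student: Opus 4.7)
The plan is to mirror the approach used in the preceding Appell theorems: first establish the one-step contiguous relation
\begin{align*}
F_{4}(A+I, B; C, C'; x, y) &= F_{4}(A, B; C, C'; x, y)\\
&\quad + xB\bigl[F_{4}(A+I, B+I; C+I, C'; x, y)\bigr]C^{-1}\\
&\quad + yB\bigl[F_{4}(A+I, B+I; C, C'+I; x, y)\bigr]{C'}^{-1},
\end{align*}
and then iterate it $s$ times by the same telescoping that produced (\ref{3eq1}) and (\ref{3eq14}).

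To obtain the one-step relation, I would substitute the identity (\ref{2eqp1}) in the additive form
\[(A+I)_{m+n}=A^{-1}(A)_{m+n}\bigl(A+(m+n)I\bigr)=(A)_{m+n}+mA^{-1}(A)_{m+n}+nA^{-1}(A)_{m+n}\]
into the defining series (\ref{1eq5}) of $F_{4}(A+I, B; C, C'; x, y)$. The first piece reproduces $F_{4}(A, B; C, C'; x, y)$, leaving two sums $S_1$ and $S_2$ carrying the factors $m$ and $n$ respectively. In $S_1$, cancelling $m$ against $m!$ and re-indexing $m\mapsto m+1$, the elementary relations $A^{-1}(A)_{m+n+1}=(A+I)_{m+n}$, $(B)_{m+n+1}=B(B+I)_{m+n}$ and $(C)_{m+1}^{-1}=C^{-1}(C+I)_{m}^{-1}$ rewrite the summand so that, once $AB=BA$ is used to move $B$ past $(A+I)_{m+n}$ to the extreme left and $CC'=C'C$ is used to slide $C^{-1}$ past $(C+I)_m^{-1}$ and $(C')_n^{-1}$ to the extreme right, the sum collapses to $xB\,F_{4}(A+I, B+I; C+I, C'; x, y)\,C^{-1}$. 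The sum $S_2$ is treated symmetrically with $(m, C)\leftrightarrow(n, C')$, yielding the corresponding $y$-term.

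With the one-step relation in hand, iterating it by replacing $A$ successively with $A+I, A+2I,\ldots, A+(s-1)I$ and telescoping gives (\ref{3eq39}). For (\ref{3eq40}), I would substitute $A\mapsto A-I$ in the one-step relation (legitimate since $A-I$ is invertible by hypothesis), rearrange to isolate $F_{4}(A-I, B; C, C'; x, y)$ on the left, and iterate $s$ times exactly as was done in passing from (\ref{3eqp21}) to (\ref{3eq2}).

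The delicate point is the non-commutative bookkeeping in $S_1$ and $S_2$: since $B$ is not assumed to commute with $C$ or $C'$, one must check that every rearrangement is covered either by one of the two hypotheses $AB=BA$ and $CC'=C'C$ or by the trivial commutation of a matrix with a polynomial in itself. The ordering $B\,[\,\cdot\,]\,C^{-1}$ and $B\,[\,\cdot\,]\,{C'}^{-1}$ chosen in the statement is precisely the one for which these suffice; any other placement would require the stronger hypothesis that $B$ commute with $C$ and $C'$, which is not imposed.
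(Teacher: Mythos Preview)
Your proposal is correct and follows exactly the template the paper uses for the analogous results on $_2F_1$ and $F_1$ (the paper itself omits the proof of this theorem, stating that it proceeds in the same way). The one-step contiguous relation via $(A+I)_{m+n}=A^{-1}(A)_{m+n}(A+(m+n)I)$ followed by telescoping is the intended argument, and your commutativity bookkeeping is accurate; the only slight over-statement is that $C^{-1}$ commutes with $(C+I)_m^{-1}$ automatically (both are functions of $C$), so the hypothesis $CC'=C'C$ is needed solely to pass $C^{-1}$ through $(C')_n^{-1}$ in $S_1$.
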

\begin{theorem}\label{rth4}
Let $A+sI$ be an invertible matrix for all  integers $s\geq0$ and let $AB=BA$; $CC'=C'C$. Then the following recursion formula holds true for the  Appell matrix function $F_{4}$:
\begin{align}
&F_{4}(A+sI, B; C, C'; x, y)\notag\\
&=\sum_{k_1+k_2\leq s}^{}{s\choose k_1, k_2}{(B)_{k_1+k_2}}\, x^{k_1} y^{k_2}\,\notag\\
&\quad\times\Big[{F_{4}}(A+(k_1+k_2)I, B+(k_1+k_2)I; C+k_1 I, C'+k_2 I; x, y)\Big]{(C)^{-1}_{k_1} (C')^{-1}_{k_2}}.
\label{3eq41}
\end{align}
Furthermore, if $A-kI$ is invertible for  integers  $k\leq s$, then
\begin{align}
&F_{4}(A-sI, B;C, C'; x, y)\notag\\
&=\sum_{k_1+k_2\leq s}^{}{s\choose k_1, k_2}{(B)_{k_1+k_2}}\, (-x)^{k_1} (-y)^{k_2}\,\notag\\
&\quad\times\Big[{F_{4}}(A, B+(k_1+k_2)I; C+k_1 I, C'+k_2 I; x, y)\Big]{(C)^{-1}_{k_1} (C')^{-1}_{k_2}}.
\label{3eq42}
\end{align}
\end{theorem}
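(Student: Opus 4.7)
The plan is to mirror the inductive strategy used for the preceding Appell-type theorems. I would first establish the $s=1$ contiguous relation underlying (\ref{3eq39}) directly from the series (\ref{1eq5}): apply $(A+I)_{m+n}=A^{-1}(A)_{m+n}(A+(m+n)I)$ and split $A+(m+n)I = A + mI + nI$. The $A$-piece reassembles into $F_{4}(A,B;C,C';x,y)$. The $m$-piece, after absorbing $m/m!$ via the shift $m\mapsto m+1$ together with $(A)_{m+n+1}=A(A+I)_{m+n}$, $(B)_{m+n+1}=B(B+I)_{m+n}$ and $(C)_{m+1}^{-1}=(C+I)_{m}^{-1}C^{-1}$, becomes $xB\,[F_{4}(A+I,B+I;C+I,C';x,y)]\,C^{-1}$; here $AB=BA$ is used to push $B$ past $(A+I)_{m+n}$ to the far left, and $CC'=C'C$ is used to slide $C^{-1}$ past $(C')_{n}^{-1}$ to the far right. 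The $n$-piece is symmetric and produces $yB\,[F_{4}(A+I,B+I;C,C'+I;x,y)]\,(C')^{-1}$.

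I would then prove (\ref{3eq41}) by induction on $s$, writing $K=k_1+k_2$ for brevity. Assuming the formula at level $s=t$, replace $A$ by $A+I$ in the inductive hypothesis to expand $F_{4}(A+(t+1)I,B;C,C';x,y)$ as a sum over $(k_1,k_2)$ with $k_1+k_2\leq t$ whose summands contain $F_{4}(A+(K+1)I,B+KI;C+k_{1}I,C'+k_{2}I;x,y)$. Apply the $s=1$ contiguous relation to each such $F_{4}$; this is legitimate because $AB=BA$ and $CC'=C'C$ are preserved under integer shifts. Then use $(B)_{K}(B+KI)=(B)_{K+1}$ to merge the $B$-factors, $(C+k_{1}I)^{-1}(C)_{k_{1}}^{-1}=(C)_{k_{1}+1}^{-1}$ to merge the $C$-chain (after sliding $(C+k_{1}I)^{-1}$ past $(C')_{k_{2}}^{-1}$ via $CC'=C'C$), and the analogous $C'$-identity for the $n$-piece. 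After reindexing $k_{1}\mapsto k_{1}-1$ in the $x$-sum and $k_{2}\mapsto k_{2}-1$ in the $y$-sum, the three contributions carry identical summands with coefficients $\binom{t}{k_1,k_2}$, $\binom{t}{k_1-1,k_2}$ and $\binom{t}{k_1,k_2-1}$, whose sum is $\binom{t+1}{k_1,k_2}$ by the trinomial Pascal identity. This yields (\ref{3eq41}) at level $t+1$. For (\ref{3eq42}), I would replace $A$ by $A-I$ in the $s=1$ contiguous relation and solve for $F_{4}(A-I,B;C,C';x,y)$; this only requires $A-I$ invertible and produces the same shape with minus signs and with the right-hand side $A$-slot equal to $A$ rather than $A-I$, which is exactly why the $A$-argument in every summand of (\ref{3eq42}) stays fixed at $A$ while the signs of $x,y$ flip. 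The induction is otherwise identical.

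The main obstacle I anticipate is the noncommutative bookkeeping: the hypotheses give only $AB=BA$ and $CC'=C'C$, so $B$ need not commute with $C$ or $C'$. Consequently the precise sandwich structure $(B)_{K}\cdots(C)_{k_1}^{-1}(C')_{k_2}^{-1}$ in each summand must be preserved throughout the reindexing, and every migration of a matrix factor has to be justified against one of the two available commutation hypotheses. Once this positional accounting is done carefully, the combinatorial core of the argument is simply the trinomial Pascal identity.
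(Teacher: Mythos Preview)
Your proposal is correct and follows precisely the scheme the paper uses for the analogous results (e.g.\ Theorem~\ref{rth21} and the $F_{1}$ case): derive the $s=1$ contiguous relation from the series via $(A+I)_{m+n}=A^{-1}(A)_{m+n}(A+(m+n)I)$, then induct and close with the (trinomial) Pascal identity. The paper in fact omits the proof of this particular theorem, so your write-up, including the explicit attention to where $AB=BA$ and $CC'=C'C$ are invoked, is exactly what is needed.
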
\bigskip
The recursion formulas for $F_{4}(A, B\pm sI; C, C'; x, y)$  are obtained by replacing $A \leftrightarrow B$  in Theorems~\ref{rth3} --~\ref{rth4}.

\begin{theorem}
Let $C-sI$ and $C'-sI$ be invertible matrices for all   integers $s\geq0$ and  let $AB=BA$; $CC'=C'C$. Then following recursion formulas hold true for the  Appell matrix function $F_{4}$:
\begin{align}
&F_{4}(A, B; C-sI, C'; x, y)\notag\\
&=F_{4}(A, B; C, C'; x, y)\notag\\&+ x AB\Big[\sum_{k=1}^{s}\, F_{4}(A+I, B+I; C+(2-k)I, C'; x, y){(C-kI)^{-1}(C-(k-1)I)^{-1}}\Big];\label{3eq43}
\end{align}
\begin{align}
&F_{4}(A, B; C, C'-sI; x, y)\notag\\
&=F_{4}(A, B; C, C'; x, y)\notag\\&+ y AB\Big[\sum_{k=1}^{s}\, F_{4}(A+I, B+I;\,C, C'+(2-k)I; x, y){(C'-kI)^{-1}(C'-(k-1)I)^{-1}}\Big].\label{3eq44}
\end{align}
\end{theorem}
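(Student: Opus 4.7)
The plan is to follow the template used earlier in the paper for the companion $C$-shift recursions (namely~(\ref{2eq9}) for $_2F_1$ and~(\ref{3eq13}) for $F_1$): derive a single-step contiguous relation for $F_4(A,B;C-I,C';x,y)$ and then iterate it $s$ times; the formula for the $C'$-shift will follow by the obvious symmetric argument.

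First I would start from the series definition~(\ref{1eq5}) and substitute the identity
\[
(C-I)_m^{-1} = (C)_m^{-1}\bigl[I + m(C-I)^{-1}\bigr],
\]
valid because all of $C$, $C-I$, $(C)_m^{-1}$, $(C-I)^{-1}$ are functions of the same matrix $C$ and hence commute via the matrix functional calculus. The ``$I$'' contribution simply reproduces $F_4(A,B;C,C';x,y)$. In the ``$m(C-I)^{-1}$'' contribution the factor $m$ cancels one factorial; after the shift $m\mapsto m+1$ together with the factorizations $(A)_{m+1+n}=A(A+I)_{m+n}$, $(B)_{m+1+n}=B(B+I)_{m+n}$, and $(C)_{m+1}^{-1}=(C+I)_m^{-1}C^{-1}$, one arrives at the contiguous relation
\[
F_4(A,B;C-I,C';x,y) = F_4(A,B;C,C';x,y) + xAB\bigl[F_4(A+I,B+I;C+I,C';x,y)\bigr]C^{-1}(C-I)^{-1}.
\]
The hypothesis $AB=BA$ is what lets us collect $A,B$ on the left, and $CC'=C'C$ ensures the untouched $(C')_n^{-1}$ factor can remain undisturbed on the right throughout.

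Second, I would apply this contiguous relation iteratively: to $F_4(A,B;C-I,C';x,y)$ with $C$ replaced by $C-I$, then to $F_4(A,B;C-2I,C';x,y)$ with $C$ replaced by $C-2I$, and so on. At the $k$-th iteration a new term
\[
xAB\bigl[F_4(A+I,B+I;C+(2-k)I,C';x,y)\bigr](C-kI)^{-1}(C-(k-1)I)^{-1}
\]
is produced. Summing over $k=1,\dots,s$ yields~(\ref{3eq43}). Formula~(\ref{3eq44}) is proved in exactly the same way, starting instead from the symmetric identity $(C'-I)_n^{-1}=(C')_n^{-1}[I+n(C'-I)^{-1}]$, with the roles of $(C,m,x)$ and $(C',n,y)$ interchanged; here $CC'=C'C$ is what allows the $(C)_m^{-1}$ factor to remain passive.

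The only point demanding care is the non-commutative bookkeeping: the surviving $A,B$ must be collected on the left of the resulting $F_4$ factor while the pair $(C-kI)^{-1}(C-(k-1)I)^{-1}$ sits on the right, and one must track this placement consistently through every iteration. Given the commutation hypotheses $AB=BA$ and $CC'=C'C$ and the fact that all shifted $C$-dependent factors are functions of a single matrix, this is mechanical rather than conceptual, so I do not anticipate any genuine obstacle beyond careful indexing, exactly as in the analogous proofs of~(\ref{2eq9}) and~(\ref{3eq13}).
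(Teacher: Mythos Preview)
Your proposal is correct and follows precisely the approach the paper intends: the paper omits the proof of this theorem, but its proofs of the analogous results~(\ref{2eq9}) and~(\ref{3eq13}) proceed exactly as you describe---establish the single-step contiguous relation via the identity $(C-I)_m^{-1}=(C)_m^{-1}[I+m(C-I)^{-1}]$ and then iterate $s$ times. Your bookkeeping of the non-commutative placements (pulling $AB$ to the left using $AB=BA$, pushing $C^{-1}(C-I)^{-1}$ past $(C')_n^{-1}$ using $CC'=C'C$) is accurate and matches what is needed.
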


\end{document}